\newcommand\lsection{\@startsection {section}{1}{\z@}%
                                   {-3.5ex \@plus -1ex \@minus -.2ex}%
                                   {1.0ex \@plus.2ex}%
                                   {\normalfont\large\bfseries}}
\newtheorem{thm}{Theorem}[section]
\newtheorem{lem}[thm]{Lemma}
\newtheorem{prop}[thm]{Proposition}
\theoremstyle{definition}
\newtheorem{defn}[thm]{Definition}
\newtheorem{rem}[thm]{Remark}
\newtheorem{exam}[thm]{Example}
\newcommand{\NN}{\mathbf{N}}
\newcommand{\ZZ}{\mathbf{Z}}
\newcommand{\RR}{\mathbf{R}}
\newcommand{\PP}{\mathbf{P}}
\newcommand{\comp}{\text{\scriptsize$\circ$}}
\newcommand{\pd}[2]{\frac{\partial{#1}}{\partial{#2}}}
\newcommand{\dist}{\mathop{{\mathrm{dist}}}}
\newcommand{\RelInt}{\mathop{\mathrm{ri}}}
\newcommand{\cD}{{\mathcal{D}}}
\newcommand{\cO}{{\mathcal{O}}}
\newcommand{\GL}{{\mathop\mathrm{GL}}}
\newcommand{\SO}{{\mathop\mathrm{SO}}}
\newcommand{\OT}{{\mathop\mathrm{O}}}
\title[Nonsmooth inverse mapping theorems]
{Tame nonsmooth inverse mapping theorems}
\author{Toshizumi Fukui}
\address
{Department of Mathematics, Faculty of Science, Saitama
University, 255 Shimo-Okubo, Urawa 338-8570, Japan}
\email{tfukui@rimath.saitama-u.ac.jp}
\author{Krzysztof KURDYKA}
\address
{Universite de Savoie et CNRS, UMR 5127 
Laboratoire de Mathematiques (LAMA)
73-376 Le Bourget-du-Lac cedex FRANCE}
\email{ Krzysztof.Kurdyka@univ-savoie.fr}
\author{Laurentiu Paunescu}                               
\address{School of Mathematics and Statistics\\
University of Sydney, NSW 2006, Australia}
\email{laurent@maths.usyd.edu.au}
\urladdr{}
\keywords{inverse mapping, univalence, convex, subanalytic, o-minimal, Lipschitz}
\subjclass[2000]{Primary 52Axx, 32F27, 03C64, Secondary 14P10, 58A35.}
\date{ 17 December 2007}
\begin{document}
\maketitle

%\begin{center}
%{\Large
%\textbf{Tame Nonsmooth Inverse Mapping Theorems}}\\
%\ \\
%\textrm{Toshizumi FUKUI, Krzysztof KURDYKA and Laurentiu PAUNESCU}\\
%\ \\
%8 March 2007
%\end{center}
\begin{abstract}
We give several versions of  local and global inverse mapping theorem for tame
non necessarily smooth, mappings. 
Here tame mapping means a mapping which is subanalytic or, 
more generally, definable in some o-minimal structure. Our sufficient conditions are 
formulated in terms of various properties (convexity, positivity of some principal minors, contractiblity)  of the space of Jacobi's matrices at smooth points.
\end{abstract}

\lsection{Introduction.}
The classical inverse mapping theorem gives conditions under which a
$C^r$, $r\ge1$ mapping admits locally a $C^r$ inverse.  
F.~H.~Clarke \cite{Clarke}
generalized the inverse mapping theorem to merely Lipschitz mappings. 
For a Lipschitz map-germ
$f:(\RR^n,0)\to(\RR^n,0)$
he defined the generalized Jacobian $\partial f(0)$ as the convex hull of all matrices which are 
limits of Jacobi's matrices $df(x)$ as $x\to0$. Then he  
 showed that $f$ admits Lipschitz inverse 
when $\partial f(0)$ does not contain singular matrices.

In this paper we show several versions 
of inverse mapping theorems 
for   nonsmooth  mappings, which  belong to 
a tame category. Our results apply also to  smooth  mappings,  
with  assumptions weaker  than the classical ones we obtain existence of  
 not necessarily smooth inverses.
For a convex open subset $U$ of $\RR^m$, 
we say that a mapping $f:U\to\RR^n$ is {\bf tame}, 
if it is subanalytic or, more generally, 
definable in some o-minimal structure. 
We  recall the definition of subanalytic 
mappings and definable mappings
in an o-minimal structure in \S\ref{S:KeyLemma}.
For instance all semi-algebraic maps are tame, 
in particular all rational maps are tame. 

In Section  \ref{S:Example} we  investigate Clarke's idea, of taking the 
convex hull of all
Jacobi's matrices, and we state three global inverse mapping  (more precisely 
global injectivity) Theorems \ref{THM1},\, \ref{THM12} and \ref{THM2}.  
We also 
 state two local inverse Theorems
\ref{THM3} and \ref{THM4} based on a study of  some  minors of Jacobi's 
matrices.  In this section
 we give also several  important and 
relevant examples  which illustrate the relations between our  various 
inverse mapping theorems.
The remaining part of the paper is organized as follows.

In Section \ref{S:Preliminaries} we recall all necessary material from 
convexity theory.
In Section \ref{S:KeyLemma} we state and show a key
property of tame maps which is crucial 
in the proof of our theorems.

In Sections \ref{S:Proof12} and \ref{S:Proof34} 
we prove Theorems \ref{THM1}, \ref{THM12}, \ref{THM2} 
and respectively Theorems \ref{THM3} and  \ref{THM4}.

In Section \ref{S:Contract}, we investigate the contractibility of the set of 
all Jacobi's matrices in the general linear group $\GL(n,\RR)$, as a possible
sufficient condition for the  local invertibility. In the case  $n=2$, precisely we observe the the following fact.  
Let $f:(\RR^2,0)\to(\RR^2,0)$ be a continuous tame mapping  
which is a local  diffeomorphism, except possibly at the origin.
If the mapping 
$$
df:(\RR^2-0,0)\to\GL(2,\RR)
$$
is null homotopic, then $f$ is a homeomorphism. We give also an example which shows 
that this statement is wrong for $n=3$.

The problem of finding sufficient conditions for the injectivity of continuous maps
 has attracted numerous mathematicians working in  different fields.
 In the paper we have essentially cited only a few  relevant contributions.  An excellent   overview 
of related  recent results can be found for
instance in \cite{Gowda} and also in \cite{gutu}.

\lsection{Preliminaries}\label{S:Preliminaries}

\subsection{Convexity}\label{convexity}
We recall here basic notions and facts from the convexity theory, needed 
in this paper.
Let $V$ be a  normed space (of finite
dimension) and
  let $A\ne\emptyset$ be a subset of $V$.
We denote by $af(A)$ the {\bf
affine hull of $A$}, that is, the smallest affine subspace of 
$V$ which
contains $A$. Recall that the {\bf relative interior}, denoted by $\RelInt(C)$,
of a set $C\subset V$ is
the interior of $C$ in its affine hull $af(C)$.
If $C$, a convex set, lies in the both sides of a hyperplane $\pi$, then 
$\RelInt(C)\cap \pi=\RelInt(C\cap \pi))$, we use this several times in our examples.

By ${co}(A)$  we denote the {\bf
convex hull   of $A$}, that is, the smallest convex subset of
$V$ which contains $A$.
By $\overline{co}(A)$  we denote the {\bf
closed convex hull   of $A$}, that is, the smallest closed 
and convex subset of
$V$ which contains $A$. Recall that $\overline{co}(A)=
\overline{co}(\overline A)=\overline{{co}(A)}$ and, if $C$ is convex,
$\RelInt(C)=\RelInt(\overline C)$. In general 
${co}(\overline A) \subseteq \overline{co}(A)$.

Let $A$ be a subset of $V$, we say that a subset $S$ of $A$ is
{\bf extremal} if no point of $S$ is an interior point of a segment with
endpoints in $A$, except the case where both extremities belong to $ S$.
This can be formally stated as follows: for any $x,y\in A$ and any $t\in
(0,1)$, if
$$
tx+(1-t)y\in S
$$
then $x,y \in S$. If $S=\{a\}$ is extremal, then we say that $a$ is an  
{\bf extremal point} of $A$.
 We introduce also a weaker notion of extremality. 
We 
 say that a subset $S$ of $C$ is {\bf semi-extremal} in $C$ if and 
 only if $C\setminus S$ is convex.

We state now several elementary facts about the above sets.

\begin{lem}\label{extremalsubset} Let
$S$ be  an extremal (respectively semi-extremal) subset in $C$ and
 $C'\subset C$. 
Then $S\cap C'$ is an extremal 
(respectively semi-extremal) subset in $C'$.
\end{lem}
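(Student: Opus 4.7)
The plan is to verify each clause directly from the definition, unwinding the set-theoretic expressions; the lemma is essentially a matter of bookkeeping and presents no real obstacle.

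For the extremal case, I would start by fixing $x,y\in C'$ and $t\in(0,1)$ such that $tx+(1-t)y \in S\cap C'$. Since $C'\subset C$, the points $x$ and $y$ also lie in $C$, and the barycenter lies in particular in $S$. Applying the hypothesis that $S$ is extremal in $C$ yields $x,y\in S$. Combining with the already-known relation $x,y\in C'$, we obtain $x,y\in S\cap C'$, which is precisely what is needed for $S\cap C'$ to be extremal in $C'$.

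For the semi-extremal case, the key identity is
\[
C'\setminus(S\cap C') \;=\; C'\cap(C\setminus S),
\]
which uses only the inclusion $C'\subset C$. By hypothesis $C\setminus S$ is convex, and $C'$ is convex (this is the ambient assumption for the notion of semi-extremality to apply to $C'$). Since the intersection of two convex sets is convex, $C'\setminus(S\cap C')$ is convex, which is exactly the requirement that $S\cap C'$ be semi-extremal in $C'$.

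The proof above uses nothing beyond the definitions recalled just before the statement, so I would present it in two short paragraphs corresponding to the two assertions, with the set identity above displayed to make the semi-extremal case transparent.
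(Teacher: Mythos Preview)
Your argument is correct in both cases and is exactly the routine verification one expects; the paper in fact states this lemma without proof, treating it as one of ``several elementary facts.'' One small remark: for the semi-extremal clause you are right to flag that convexity of $C'$ is being used (otherwise the intersection $C'\cap(C\setminus S)$ need not be convex); in the paper the letter $C$ is reserved for convex sets, so this is indeed the intended ambient hypothesis even though it is not spelled out in the lemma statement.
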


\begin{lem}\label{extremalset} Any
extremal  subset  is also a semi-extremal subset. In general the converse is 
not true, it is true for 
one point sets, that is, any semi-extremal point is an extremal point.
\end{lem}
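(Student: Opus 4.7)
\emph{Plan of proof.} The lemma has three parts -- extremal $\Rightarrow$ semi-extremal; the converse fails in general; the converse holds for singletons -- and my plan is to treat each by a direct unwinding of the definitions in \S\ref{convexity}. No substantial obstacle is expected; the only design choice is producing a clean counterexample.

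For ``extremal implies semi-extremal,'' I would pick $x, y \in C \setminus S$ and $t \in (0,1)$, form $z = tx + (1-t)y$, and note that $z \in C \setminus S$: convexity of $C$ places $z \in C$, and if $z \in S$ then the definition of extremality would force $x, y \in S$, contradicting the choice. Hence $C \setminus S$ is convex.

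For the singleton converse, I would assume $C \setminus \{a\}$ is convex and suppose $tx + (1-t)y = a$ with $x, y \in C$ and $t \in (0,1)$. The observation that drives the argument is that in this linear identity $x = a$ and $y = a$ are equivalent, since each one forces the other by solving for the remaining variable. Thus either $x = y = a$ (the desired conclusion) or both $x, y \in C \setminus \{a\}$; the latter is impossible, since convexity of $C \setminus \{a\}$ would then place $a$ in it. For the failure of the converse in general, I would exhibit the closed unit disk $C = \overline{B}(0,1) \subset \RR^2$ together with the closed upper half-disk $S = \{(x_1, x_2) \in C : x_2 \ge 0\}$: its complement is the (convex) open lower half-disk, so $S$ is semi-extremal, yet $(0,0) \in S$ is the midpoint of $(\tfrac12, -\tfrac12)$ and $(-\tfrac12, \tfrac12) \in C$, so $S$ is not extremal.
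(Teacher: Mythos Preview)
Your proof is correct. The paper states this lemma as an elementary fact without supplying any argument, so there is nothing to compare your approach against; your direct unwinding of the definitions is exactly what is intended.

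One small remark: in Part~1 you invoke ``convexity of $C$'' to place $z=tx+(1-t)y$ back in $C$. This hypothesis is not stated explicitly in the lemma, but it is necessary (e.g.\ take $C=\{0,1,2\}\subset\RR$ and $S=\{0\}$: then $S$ is extremal but $C\setminus S=\{1,2\}$ is not convex). You are right to make it explicit, and it is consistent with how the lemma is used later in the paper, where the ambient set is always $\overline{co}(f)$ or a similar convex object. Your counterexample and the singleton argument are both clean and correct.
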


\begin{lem}\label{extremalsetri} If $C \subset V$ is convex and
$S\subsetneq C$ is extremal  in $C$, then $S\cap riC =\emptyset$.
\end{lem}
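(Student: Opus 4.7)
The plan is to argue by contradiction. Suppose that there exists a point $a \in S \cap \RelInt(C)$. Since $S \subsetneq C$, we can choose a point $b \in C \setminus S$; the strategy is to use $a$ being in the relative interior to build a segment witnessing the extremality condition, which will force $b$ into $S$ and yield a contradiction.

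The key convex-analytic fact I would invoke is the following standard property of the relative interior: if $a \in \RelInt(C)$ and $b \in C$, then the segment from $b$ through $a$ can be prolonged slightly past $a$ while remaining in $C$. Concretely, there exists $c \in C$ and $t \in (0,1)$ such that $a = tb + (1-t)c$. Indeed, $a - b$ lies in the linear space parallel to $af(C)$, so for small enough $\varepsilon > 0$ the point $c := a + \varepsilon(a - b)$ still belongs to $\RelInt(C) \subset C$; solving $a = tb + (1-t)c$ gives $t = \varepsilon/(1+\varepsilon) \in (0,1)$.

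With this in hand, the extremality of $S$ in $C$ applied to $a \in S$, $b,c \in C$, and $t \in (0,1)$ forces both $b \in S$ and $c \in S$. But $b$ was chosen in $C \setminus S$, a contradiction. Hence $S \cap \RelInt(C) = \emptyset$, as claimed.

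The only non-trivial ingredient is the prolongation lemma for the relative interior, which is classical and presumably either already available in the material cited in \S\ref{convexity} or harmless to quote. Beyond that, everything reduces to a single direct application of the definition of extremal set, so I do not anticipate any real obstacle.
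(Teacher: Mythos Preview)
Your argument is correct and is the standard proof of this elementary fact. The paper itself does not supply a proof of Lemma~\ref{extremalsetri}; it is listed among ``several elementary facts'' and left to the reader, so there is nothing to compare.
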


We need also a lemma about the image of an extremal set.
\begin{lem}\label{extremalimage}  Let $\varphi: V\to W$ be a linear map.
Assume that
$C
\subset V$ is convex and
$S\subset C$ is extremal  (respectively semi-extremal)  in $C$, moreover
$S= C\cap\varphi^{-1}(\varphi(S))$. Then $\varphi(S)$ is extremal 
(respectively semi-extremal) 
in $\varphi(C)$.
\end{lem}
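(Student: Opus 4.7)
The plan is to use the hypothesis $S=C\cap\varphi^{-1}(\varphi(S))$ as the mechanism that prevents the fibers of $\varphi$ from disturbing the extremal/semi-extremal structure. Both assertions reduce quickly to linearity of $\varphi$ and convexity of $C$; there is no serious obstruction, and the two cases can be handled in parallel.

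For the \emph{extremal} part, I would take $p,q\in\varphi(C)$ and $t\in(0,1)$ with $tp+(1-t)q\in\varphi(S)$, and choose preimages $x,y\in C$ with $\varphi(x)=p$, $\varphi(y)=q$. By linearity of $\varphi$,
\[
\varphi\bigl(tx+(1-t)y\bigr)=tp+(1-t)q\in\varphi(S),
\]
so $tx+(1-t)y\in\varphi^{-1}(\varphi(S))$. Convexity of $C$ gives $tx+(1-t)y\in C$, hence the hypothesis $S=C\cap\varphi^{-1}(\varphi(S))$ places this point in $S$. Extremality of $S$ in $C$ then forces $x,y\in S$, whence $p=\varphi(x)$ and $q=\varphi(y)$ lie in $\varphi(S)$.

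For the \emph{semi-extremal} part, I would establish the identity
\[
\varphi(C)\setminus\varphi(S)=\varphi(C\setminus S).
\]
The inclusion $\supseteq$ uses $S=C\cap\varphi^{-1}(\varphi(S))$: if $x\in C\setminus S$, then $x\notin\varphi^{-1}(\varphi(S))$, so $\varphi(x)\notin\varphi(S)$. The inclusion $\subseteq$ is immediate, since any $y\in\varphi(C)\setminus\varphi(S)$ is $\varphi(x)$ for some $x\in C$, and $x\in S$ would already give $y\in\varphi(S)$. Semi-extremality of $S$ in $C$ means $C\setminus S$ is convex, hence $\varphi(C\setminus S)$ is convex as the linear image of a convex set; by the identity just established, $\varphi(C)\setminus\varphi(S)$ is convex, i.e.\ $\varphi(S)$ is semi-extremal in $\varphi(C)$.

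The only subtle point worth flagging is the role of the hypothesis $S=C\cap\varphi^{-1}(\varphi(S))$: without it one cannot, in the extremal case, conclude that an arbitrary preimage of a point of $\varphi(S)$ belongs to $S$, and in the semi-extremal case one loses the non-trivial inclusion $\supseteq$ in the set identity above. Everything else is essentially routine bookkeeping.
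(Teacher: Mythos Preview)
Your argument is correct in both cases. For the semi-extremal part you reproduce exactly the paper's proof: the identity $\varphi(C)\setminus\varphi(S)=\varphi(C\setminus S)$ (which the paper states in one line ``by our assumption'') together with the fact that a linear image of a convex set is convex.

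For the extremal part your direct verification is actually more complete than what the paper writes. The paper's proof only records the set identity above and then appeals to Lemma~\ref{extremalset} to conclude extremality of $\varphi(S)$; but Lemma~\ref{extremalset} says that extremal implies semi-extremal, not the converse, so that citation does not by itself yield the extremal claim. Your argument---lifting $p,q\in\varphi(C)$ to $x,y\in C$, using linearity and the saturation hypothesis to place $tx+(1-t)y$ in $S$, and then invoking extremality of $S$---is the clean way to handle this case and fills the gap.
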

\begin{proof} Indeed $\varphi(C)\setminus \varphi(S)=\varphi(C
\setminus S)$, by our assumption. Hence by Lemma \ref{extremalset}
the set $\varphi(S)$ is extremal
in $\varphi(C)$.
\end{proof}

\subsection{Closed  convex  cones}\label{convexclosedcones}
%We recall now basic properties of closed convex cones.
Let $V$ be a  normed space of finite
dimension and
  let $X$ be a subset of $V$. We say that
  $X$ is a    {\bf cone} if  $x\in X$ and $t\ge 0$ implies $tx\in X$.
For any $A\subset V$ we denote by $\overline{cone}(A)$ the {\bf closed 
convex conic  hull of $A$},
 that is the smallest  closed convex cone in $V$ which
contains $A$.  We denote by $\bar B(1)$  the closed unit ball in $V$. 
We have the following obvious fact.

\begin{lem}\label{generatingclosedcones} Let $X\subset V$ be a cone, then $X$ 
is closed if
and only if $X\cap \bar B(1)$ is compact.
\end{lem}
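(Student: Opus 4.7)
The plan is to prove the two directions separately. The forward implication is essentially immediate: since $V$ is finite-dimensional, the closed unit ball $\bar B(1)$ is compact, so if $X$ is closed then $X\cap \bar B(1)$ is a closed subset of a compact set, and therefore compact.

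For the reverse implication, I would assume $K:=X\cap \bar B(1)$ is compact and establish closedness of $X$ by a sequential argument. Let $(x_n)\subset X$ converge to some $x\in V$; I must show $x\in X$. If $X$ is empty there is nothing to prove, and if $X$ is nonempty then $0\in X$ (apply the cone property with $t=0$ to any element of $X$), which settles the case $x=0$.

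If $x\neq 0$, the idea is to normalize and invoke compactness of $K$. For $n$ large enough $x_n\neq 0$, so I set $u_n:=x_n/\|x_n\|$; the cone property (with $t=1/\|x_n\|\ge 0$) gives $u_n\in X$, and $\|u_n\|=1$ forces $u_n\in K$. Since $\|x_n\|\to \|x\|>0$, continuity yields $u_n\to x/\|x\|$, and closedness of $K$ (it is compact, hence closed) implies $x/\|x\|\in K\subset X$. A final application of the cone property with $t=\|x\|\ge 0$ produces $x=\|x\|\,(x/\|x\|)\in X$, completing the proof.

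I do not anticipate a genuine obstacle here; the only point requiring care is to isolate the case $x=0$, since the normalization trick is available only away from the origin. Once that case is dispatched using the defining property of a cone, the argument is entirely routine in finite dimension.
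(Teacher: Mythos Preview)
Your proof is correct and rests on the same idea as the paper's: both exploit the scaling representation $x=\|x\|\cdot(x/\|x\|)$, i.e.\ the surjection $\RR_{\ge 0}\times (X\cap \bar B(1))\to X$, $(t,u)\mapsto tu$. The paper compresses this into the one-line remark that this map is surjective and proper, while you unwind it as an explicit sequential argument; your version has the small advantage of treating the point $0$ explicitly, whereas the paper's properness claim is a bit glib there (the fibre over $0$ is $\RR_{\ge 0}\times\{0\}$, which is not compact).
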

Indeed the map 
$\RR_{\ge 0} \times ( X\cap \bar B(1))\ni (t,x) \mapsto tx \in  X $  
is surjective and proper.

\begin{lem}\label{extremalconimage}  Let $V,W$ be linear spaces of finite 
dimension and 
let $\varphi: V\to W$ be a linear map.
Assume that
$X
\subset V$ is a closed  convex cone. Then $\varphi(X)$  is also  a closed  
convex cone.
\end{lem}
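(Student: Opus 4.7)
The convexity of $\varphi(X)$ and the fact that it is a cone are immediate from linearity: for $y_i=\varphi(x_i)\in\varphi(X)$ with $x_i\in X$ and $\lambda\in[0,1]$, one has $\lambda y_1+(1-\lambda)y_2=\varphi(\lambda x_1+(1-\lambda)x_2)\in\varphi(X)$ by convexity of $X$; and for $t\ge 0$, $ty_1=\varphi(tx_1)\in\varphi(X)$ since $X$ is a cone. The whole content of the lemma is therefore the closedness of $\varphi(X)$.

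To prove closedness I would appeal to the preceding Lemma \ref{generatingclosedcones}: since $\varphi(X)$ is already a cone in $W$, it is closed if and only if $\varphi(X)\cap\bar B(1)$ is compact, where $\bar B(1)$ is the closed unit ball in $W$. Boundedness is automatic, so the task reduces to showing that $\varphi(X)\cap\bar B(1)$ is closed: given a sequence $y_n=\varphi(x_n)$ with $x_n\in X$ and $\|y_n\|\le 1$ converging to some $y\in W$, I must produce $x\in X$ with $\varphi(x)=y$.

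For the lifting step I would choose each $x_n$ of minimal Euclidean norm in the fibre $X\cap\varphi^{-1}(y_n)$; this fibre is a non-empty closed convex subset of $V$, so a unique minimiser exists. Once the sequence $(x_n)$ so selected is shown to be bounded, finite-dimensional Bolzano--Weierstrass delivers a convergent subsequence $x_n\to x$, with $x\in X$ by closedness of $X$ and $\varphi(x)=y$ by continuity, completing the proof.

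The main obstacle is precisely the boundedness of the minimal-norm lifts. The difficulty lives in the kernel cone $N:=X\cap\ker\varphi$: if $(x_n)$ were unbounded, then after normalising, the sequence $x_n/\|x_n\|$ would accumulate, by compactness of the unit sphere and closedness of $X$, at a unit vector $v\in N$, and one must re-absorb this drift into a strictly smaller-norm lift. The argument closes cleanly when the accumulating direction $v$ lies in the lineality space $X\cap(-X)\cap\ker\varphi$, since one may then subtract a positive multiple of $v$ from $x_n$ without leaving $X$, preserving $\varphi(x_n)$ while strictly decreasing the norm, contradicting minimality. This kernel-absorption step is the main technical hurdle, and it is where I expect either the finite-dimensional convexity machinery of the preceding subsection, or additional structure implicit in the context, to play the decisive role.
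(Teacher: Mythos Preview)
Your instinct at the end is exactly right, and the obstacle you flag is not a technicality that yields to further convexity tricks: the lemma as stated is \emph{false}. Take the rotated Lorentz cone
\[
X=\{(x,y,z)\in\RR^3:\ x\ge0,\ y\ge0,\ z^2\le xy\}
\]
and the projection $\varphi(x,y,z)=(x,z)$. For $x>0$ one may choose $y\ge z^2/x$, so every $(x,z)$ with $x>0$ lies in $\varphi(X)$; for $x=0$ only $z=0$ survives. Thus $\varphi(X)=\{(x,z):x>0\}\cup\{(0,0)\}$, which is not closed. The example matches your diagnosis exactly: $X\cap\ker\varphi=\{(0,y,0):y\ge0\}$ has trivial lineality, the accumulating direction $v=(0,1,0)$ cannot be subtracted without leaving $X$, and the minimal-norm lifts of $y_n=(1/n,1)$ are $x_n=(1/n,n,1)$, which blow up.

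The paper's own argument is circular. It asserts $\varphi(X)=\overline{cone}\bigl(\varphi(X\cap\bar B(1))\bigr)$; but the inclusion $\supseteq$ already presupposes that $\varphi(X)$ is closed, and Lemma~\ref{generatingclosedcones} does not say that the cone generated by a compact set is closed (the same counterexample refutes that too). What \emph{is} true, and what your lineality remark essentially proves, is that $\varphi(X)$ is closed whenever $X\cap\ker\varphi$ is a linear subspace; this extra hypothesis (or some direct argument) would be needed at the one place the lemma is invoked, just before Lemma~\ref{crucialemnonLip}.
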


\begin{proof} It easy to see that $\varphi(X)$ is a convex cone, we prove 
now that it  is also closed.
Indeed  we have 
$$\varphi(X)= \overline{cone}(\varphi (X\cap \bar B(1)),$$
so the claim follows from Lemma \ref{generatingclosedcones}, since 
$\varphi (X\cap \bar B(1))$ is compact.

\end{proof}
\subsection{Extremal sets in the space of  matrices}
\label{extremalmatrices}
We shall use frequently the extremal property with respect to some subspaces 
in the space of matrices.
Let  $X$ be a convex subset of the space of matrices $M(m,n)$.  For a  vector 
$v\in \RR^n$, $|v|=1$ we put
$$
\Sigma_v(X) := \{A\in M(m,n) :\, Av=0\}\cap X.
$$

\begin{prop}\label{extremalprop} The  following conditions are equivalent:
\begin{enumerate} 
\item for each $v\in \RR^n$, $|v|=1$ the set $\Sigma_v(X)$ is extremal (or 
semi-extremal) 
in $X$
\item for any $A, B \in X$ we have
%and any $t\in (0,1)$ we have
$$\ker (A+B) \subseteq \ker A.$$
\end{enumerate}
\end{prop}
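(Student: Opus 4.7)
The plan is to close the circuit (1)-extremal $\Rightarrow$ (1)-semi-extremal $\Rightarrow$ (2) $\Rightarrow$ (1)-extremal. The first arrow is Lemma~\ref{extremalset}. For the implication (1)-semi-extremal $\Rightarrow$ (2), given $A,B\in X$ and $v\in\ker(A+B)$, the midpoint $M=\tfrac12(A+B)$ lies in $X\cap\Sigma_v(X)$. Assuming $v\notin\ker A$ yields $Av\neq 0$ and $Bv=-Av\neq 0$, so $A,B\in X\setminus\Sigma_v(X)$; semi-extremality of $\Sigma_v(X)$ then demands $M\in X\setminus\Sigma_v(X)$, a contradiction.

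The substantive implication is (2) $\Rightarrow$ (1)-extremal. Condition (2) controls only $\ker(A+B)$, whereas extremality concerns arbitrary convex combinations $\tau A+(1-\tau)B$, so (2) must be leveraged less literally. I would apply (2) not to $A$ and $B$ themselves but to two distinct interior combinations $P_s=sA+(1-s)B$ and $P_t=tA+(1-t)B$, both of which lie in $X$ by convexity. For $v\in\ker(P_s+P_t)$, condition (2) gives $v\in\ker P_s$; writing $u=Av$ and $w=Bv$, this becomes the pair $(s+t)u+(2-s-t)w=0$ and $su+(1-s)w=0$, and eliminating $u$ produces $(s-t)w=0$. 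Hence whenever $s\neq t$ in $[0,1]$ one obtains $w=0$, followed by $u=0$ from either equation, i.e. $\ker(P_s+P_t)\subseteq\ker A\cap\ker B$.

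To finish, every $\tau\in(0,1)$ is of the form $(s+t)/2$ with $s\neq t$ in $[0,1]$ (take $(s,t)=(2\tau,0)$ for $\tau\le\tfrac12$ and $(s,t)=(1,2\tau-1)$ for $\tau\ge\tfrac12$). Since $P_s+P_t=2\bigl(\tau A+(1-\tau)B\bigr)$ share the same kernel, the previous inclusion rephrases as $\ker\bigl(\tau A+(1-\tau)B\bigr)\subseteq\ker A\cap\ker B$ for every $\tau\in(0,1)$, which is precisely the extremality of $\Sigma_v(X)$: if $(\tau A+(1-\tau)B)v=0$ then both $Av=0$ and $Bv=0$. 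The main obstacle is recognising that (2) must be fed with points drawn from the interior of the segment $[A,B]\subset X$ rather than only with its endpoints; this is the one place where convexity of $X$ does genuine work, and without it one only recovers the $\tau=\tfrac12$ case already contained in (2).
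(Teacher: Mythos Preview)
Your proof is correct. The implication (1)-semi-extremal $\Rightarrow$ (2) via the midpoint argument is essentially the paper's reasoning, just phrased without the intermediate characterisation.

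Where you genuinely diverge is in (2) $\Rightarrow$ (1)-extremal. The paper's route is to introduce the linear map $\varphi(A)=Av$ and observe that $\Sigma_v(X)$ is semi-extremal in $X$ if and only if $\varphi(X)\cap\varphi(-X)\subseteq\{0\}$, which in turn is a direct rephrasing of (2). The upgrade from semi-extremal to extremal is left implicit: once one knows $\varphi(X)\cap\varphi(-X)\subseteq\{0\}$, a short convexity argument in the image $\varphi(X)$ (if $\tau Av+(1-\tau)Bv=0$ with $Av,Bv\ne 0$, then the convex set $\varphi(X)$ contains a segment through $0$ with $0$ in its interior, forcing nonzero points into $\varphi(X)\cap\varphi(-X)$) finishes the job. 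Your approach bypasses the image-side picture entirely: you stay in $X$, feed (2) with two \emph{distinct} interior points $P_s,P_t$ of the segment $[A,B]$, and extract $Av=Bv=0$ by linear elimination. This is a pleasant and self-contained manoeuvre; its virtue is that it makes the extremal (not merely semi-extremal) conclusion fully explicit, whereas the paper's terse proof does not spell out that step. The paper's characterisation $\varphi(X)\cap\varphi(-X)\subseteq\{0\}$, on the other hand, is a reusable geometric criterion that may be handy elsewhere.
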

\begin{proof}
First we observe that   $\Sigma_v(X)$ is semi-extremal in $X$ if and only if 
\begin{equation}\label{extreq}
 \varphi (X) \cap \varphi (-X) \subseteq \{0\},
\end{equation}
where $\varphi:M(m,n)\to \RR^n$, $\varphi (A) =Av$.
Now we note that $v\in \ker (A+B)$ if and only if $Av$ and $Bv$ are 
opposite, and obtain the equivalence using inclusion \eqref{extreq}.

%???? Here there is a Latex problem for  thecounter  for "begin{equation}"
\end{proof}

\subsection{Distance to singular matrices}\label{nu}
Let $M(m,n)$ denote the set of $m\times n$ matrices and
 $\Sigma\subset M(m,n)$  the set of singular matrices. We consider  $M(m,n)$ 
as  the space of linear maps from $\RR^m$ to $\RR^n$ equipped with the 
operator norm induced by some fixed norms on  
 $\RR^m$ and $\RR^n$. We first recall two useful facts which we need later.
\begin{lem}\label{LemmaKOS}
Assume that $m\le n$, then for any $A\in M(m,n)$ we have 
$$\nu (A):=\dist(A,\Sigma)=\inf\{|A v|:v\in S^{m-1}\}.$$ 
Moreover if $m=n$, then $\nu (A)= \Vert A^{-1}\Vert^{-1}$.
\end{lem}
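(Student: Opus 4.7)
The plan is to prove the identity $\nu(A)=\inf\{|Av|:v\in S^{m-1}\}$ via two matching inequalities, and then derive $\nu(A)=\|A^{-1}\|^{-1}$ in the case $m=n$ as a short consequence of the first part.

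For the lower bound $\nu(A)\ge\inf_{v\in S^{m-1}}|Av|$, I start from an arbitrary $B\in\Sigma$. Viewing $B$ as a linear map $\RR^m\to\RR^n$ with $m\le n$, the rank of $B$ is strictly less than $m$, so $\ker B$ meets $S^{m-1}$, producing a unit vector $v$ with $Bv=0$. By definition of the operator norm,
\[
\|A-B\|\;\ge\;|(A-B)v|\;=\;|Av|\;\ge\;\inf_{v'\in S^{m-1}}|Av'|,
\]
and taking the infimum over $B\in\Sigma$ gives the claim.

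For the reverse inequality, compactness of $S^{m-1}$ and continuity of $v\mapsto|Av|$ produce a minimizer $v_0\in S^{m-1}$; set $\mu:=|Av_0|$. I would exhibit a singular $B$ with $\|A-B\|\le\mu$ as follows. Using Hahn--Banach, pick a linear functional $\phi$ on $\RR^m$ of dual norm $1$ with $\phi(v_0)=|v_0|=1$, and define
\[
Bx:=Ax-\phi(x)\,Av_0,\qquad x\in\RR^m.
\]
Then $Bv_0=0$, so $B\in\Sigma$; moreover, $A-B$ is the rank-one operator $x\mapsto\phi(x)\,Av_0$, whose operator norm is $|Av_0|\cdot\sup_{|x|=1}|\phi(x)|=\mu\cdot 1=\mu$. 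Hence $\nu(A)\le\mu$. (For Euclidean norms one may take $\phi=\langle\cdot,v_0\rangle$ and avoid Hahn--Banach; the transpose $(Av_0)v_0^T$ is the optimal rank-one perturbation.)

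For the second assertion, assume $m=n$. If $A\in\Sigma$, both $\nu(A)$ and $\|A^{-1}\|^{-1}$ are $0$; otherwise $A$ is invertible and the substitution $w=Av$ in the definition of the operator norm yields
\[
\|A^{-1}\|=\sup_{w\ne 0}\frac{|A^{-1}w|}{|w|}=\sup_{v\ne 0}\frac{|v|}{|Av|}=\Bigl(\inf_{|v|=1}|Av|\Bigr)^{-1}=\nu(A)^{-1}.
\]
The only step that is not routine is the construction of the optimal rank-one perturbation $B$: matching the operator norm exactly to $\mu$ for arbitrary (not necessarily Euclidean) norms on $\RR^m$ requires a supporting functional at $v_0$, and this is precisely what Hahn--Banach supplies.
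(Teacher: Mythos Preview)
Your proof is correct. The lower bound via a kernel vector of any $B\in\Sigma$ and the upper bound via the explicit rank-one perturbation $x\mapsto\phi(x)Av_0$ are both sound, and the reduction of the $m=n$ case to the first identity is clean.

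The paper itself does not prove this lemma; it simply cites Proposition~2.2 of \cite{KOS}. So there is no in-paper argument to compare against, and your self-contained proof in fact supplies more than the paper does. One minor remark: since the paper works in finite dimensions, invoking Hahn--Banach is harmless but not strictly necessary---a supporting functional at $v_0$ exists by elementary compactness/convexity, and for the Euclidean norms actually used later in the paper your parenthetical $\phi=\langle\cdot,v_0\rangle$ already suffices.
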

\begin{proof} See Proposition 2.2 in \cite{KOS}.  
\begin{rem}
Usually for  given matrix  $A$  the computation of $\nu (A)=\dist(A,\Sigma)$ 
is not obvious. Often
it is enough and more efficient to use an auxiliary function $g$ with the 
property that, there exist constants
$a,b>0$ such that 
$ag(A) \le  \nu (A) \le bg(A)$, for any matrix $A$. For instance we can take
$$\displaystyle
g(A)= \text{max}_I \frac {|\det A_I|}{(\sum_{i,j}A_{I,i,j}^2)^{1/2}}\, ,
$$
where $A_I$ are the $m\times m$ matrices extracted from $A$, and $A_{I,i,j}$ 
denotes 
the determinant (minor) of the matrix $A_I$ obtained  from $A$ by deleting
the $i$-th row  and the $j$-th column. 
% See \cite{jelonek} for more details.
\end{rem}

\end{proof}

For further reference we mention here the classical Theorem of the Invariance 
of Domain.

\begin{lem}[Invariance of Domain]\label{InvarianceOfDomain}
Let $U$ be an open subset of $\RR^n$.
Then, every injective continuous mapping $f:U\to\RR^n$ is open.
\end{lem}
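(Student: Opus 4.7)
The plan is to reduce the statement to a local one: for each $x_0 \in U$, take any closed ball $\bar B = \bar B(x_0, r) \subset U$ and show that $f(x_0)$ is an interior point of $f(\bar B)$; this will yield the openness of $f$. Since $\bar B$ is compact and $f$ is a continuous injection, $f|_{\bar B}$ is a homeomorphism onto its image, so $f(\partial B)$ is a topologically embedded $(n-1)$-sphere in $\RR^n$.

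By the Jordan--Brouwer separation theorem, $\RR^n\setminus f(\partial B)$ decomposes into two connected components, a bounded open set $\Omega$ and an unbounded one, sharing $f(\partial B)$ as their common boundary. The connected bounded set $f(\mathrm{int}(B))$ is disjoint from $f(\partial B)$ (by injectivity), so it lies in $\Omega$; in particular $f(x_0)\in\Omega$. It then remains to establish the reverse inclusion $\Omega\subseteq f(\mathrm{int}(B))$, which will exhibit $\Omega$ as an open neighborhood of $f(x_0)$ contained in $f(U)$, proving the theorem.

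For the reverse inclusion I would bring in Brouwer degree theory. After extending $f|_{\bar B}$ continuously to $\RR^n$ (for instance by Tietze), the local degree $\deg(f,B,p)$ is defined for every $p\notin f(\partial B)$, is locally constant in $p$, and therefore constant on the connected set $\Omega$. Because $f|_{\partial B}$ is a homeomorphism onto $\partial\Omega=f(\partial B)$, one checks that this constant value is $\pm 1$, e.g.\ by computing it at a single point near $f(x_0)$ via a linear chart and a straight-line homotopy of $f|_{\bar B}$ to that chart rel.\ boundary. Nonvanishing of the degree forces $f(x)=p$ to admit a solution in $B$ for every $p\in\Omega$, and since $\Omega\cap f(\partial B)=\emptyset$ the solution in fact lies in $\mathrm{int}(B)$, which yields $\Omega\subseteq f(\mathrm{int}(B))$.

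The genuine obstacle is precisely this last step: a purely point-set argument easily shows that $f(\mathrm{int}(B))$ is \emph{closed} in $\Omega$ (its complement in $\Omega$ equals $\Omega\cap(\RR^n\setminus f(\bar B))$, which is open in $\RR^n$), but proving it is also open in $\Omega$ is essentially equivalent to the theorem itself, so one cannot dispense with algebraic-topological input. An alternative to Brouwer degree is the local-homology route: compute $H_n(\RR^n,\RR^n\setminus\{f(x_0)\})\cong\ZZ$, use the homeomorphism $f|_{\bar B}$ and excision to identify this with $H_n(f(\bar B),f(\bar B)\setminus\{f(x_0)\})$, and conclude that $f(x_0)$ must be interior to $f(\bar B)$. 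Either approach handles the step that cannot be done by elementary means.
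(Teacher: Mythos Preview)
The paper does not actually prove this lemma; its entire proof is the citation ``See \cite[IV. Proposition 7.4]{Dold}.'' Your proposal therefore goes well beyond the paper by sketching a genuine argument via Jordan--Brouwer separation and degree theory, which is indeed one of the standard routes to invariance of domain.

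That said, two spots in your sketch are softer than they should be. First, the claim that $f(\mathrm{int}(B))$ lies in the \emph{bounded} component $\Omega$ ``because it is connected and bounded'' is not justified: a bounded connected set can perfectly well sit inside the unbounded component of the complement of an embedded sphere. The clean fix is to also invoke the companion result to Jordan--Brouwer for disks (the complement in $S^n$ of a set homeomorphic to $D^n$ is connected); then $\RR^n\setminus f(\partial B)=f(\mathrm{int}(B))\sqcup(\RR^n\setminus f(\bar B))$ with the second piece connected, so $f(\mathrm{int}(B))$ is itself one of the two components and hence open --- and you are finished without any degree argument. Second, your degree computation (``a linear chart and a straight-line homotopy rel.\ boundary'') is vague, since $f$ is merely continuous and admits no linear chart; making this precise requires exactly the homological input you allude to at the end. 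That local-homology route is in fact the argument Dold gives, so your closing alternative is both the cleanest option and the one the paper is implicitly pointing to.
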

\begin{proof}
See {\cite[IV. Proposition 7.4]{Dold}}. 
\end{proof}

\subsection{Notations}\label{Notations}
Let $U$ be an open subset of $\RR^m$ and  $f:U\to\RR^n$ be a continuous tame 
mapping. 
We denote by $B(f)$ the set of points where $f$ is not differentiable. 
% Recall  (cf. \cite{vandebDries}) that for a tame map $f$ the set $U-B(f)$ 
% contains an open and dense 
% subset of $U$. 

%We say $x$ is a {\it convexly regular} point of $f$ if
%there is an open neighborhood $V$ of $x$ so that  
%the relative interior of $\cC(f,V)$ does not contain 
%singular matrices.  
Let $V$ be an open subset of $U$,  we denote  by

$${co}(f,V): ={co}(\cD(f,V)),\, \,\overline {co}(f,V):=
\overline{co}(\cD(f,V))$$ 
the convex hull and the  closed convex  hull respectively, of the set 
$$
\cD(f,V):=\{df(x):x\in V-B(f)\}.
\quad\textrm{} 
$$
For short   we  shall  write ${co}(f)$ and $\overline{co}(f) $ 
instead 
of ${co}(f,U)$
and of $\overline {co}(f,U)$ respectively. Also we use
 $\cD(f)$ instead of $\cD(f,U)$, and for a given  $v \in \RR^n$, we put
 $\cD_v(f):=\{Av:A\in\cD(f)\}$.
 Finally we consider  the closed convex cone generated by   $\cD(f)$, and write
 $$\overline {cone}(f) : = \overline{cone}(\cD(f)).$$
Clearly $\overline {cone}(f)  = \overline{cone}({co} (f)) =
\overline{cone}(\overline {co} (f)) $.

The following routine property (Koopman-Brown theorem cf. \cite{vandebDries}) 
of tame sets will be useful. 
\begin{lem}\label{omiting}
Let  $B \subset \RR^m$ be a  set definable in an o-minimal structure. Assume 
that $B$ is nowhere dense
in $\RR^m$. Then for each $x\in \RR^n$ the set 	 
$B(x) = \{y\in \RR^m:\,  [x,y]\cap B \text{ is finite}\}$ is dense  and 
definable in $\RR^m$.
\end{lem}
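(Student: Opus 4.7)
The idea is to treat definability and density separately, using cell decomposition and o-minimal dimension theory. For definability, observe that $\Gamma=\{(y,z)\in\RR^m\times\RR^m:z\in[x,y]\cap B\}$ is definable, hence so is the family $y\mapsto[x,y]\cap B$. An o-minimal subset of $\RR^m$ is finite if and only if it has dimension $0$, so
$$B(x)=\{y\in\RR^m:\dim([x,y]\cap B)\le 0\};$$
the dimension of the fibres of a definable family being a definable condition on the parameter, this shows that $B(x)$ is definable.

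For density, apply cell decomposition to write $B$ as a disjoint finite union of cells $C_1,\dots,C_k$, each of dimension strictly less than $m$ (this is where the hypothesis that $B$ is nowhere dense enters). It suffices to prove that for each $i$ the set
$$E_i=\{y\in\RR^m:[x,y]\cap C_i\text{ is infinite}\}$$
is nowhere dense in $\RR^m$, since then $B(x)\supseteq\RR^m\setminus(E_1\cup\cdots\cup E_k)$. To bound $E_i$, introduce the polar projection
$$\pi:C_i\setminus\{x\}\to S^{m-1},\qquad \pi(y)=\frac{y-x}{|y-x|},$$
which is definable. If $[x,y]\cap C_i$ is infinite, then, being a definable subset of a line, it contains a nontrivial subinterval lying in $C_i$; hence $v=(y-x)/|y-x|$ belongs to
$$L_i=\{v\in S^{m-1}:\pi^{-1}(v)\text{ is infinite}\}.$$
By the standard fibre-dimension formula in an o-minimal structure, $\dim L_i\le \dim(C_i\setminus\{x\})-1\le m-2$. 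Consequently the cone $\{x+tv:t\ge 0,\,v\in L_i\}$ is a definable set of dimension at most $m-1$, hence nowhere dense in $\RR^m$; as $E_i$ is contained in this cone, it too is nowhere dense.

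The main technical ingredient is the fibre-dimension inequality in o-minimal geometry applied to $\pi$; once this is available, the rest is routine bookkeeping. A harmless subtlety is that $x$ itself may lie in some $C_i$, which is why I project on $C_i\setminus\{x\}$ — removing a single point does not change the relevant dimensions, and the contribution of $x$ to any $[x,y]\cap C_i$ is a single point, so it does not affect finiteness.
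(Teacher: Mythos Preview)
The paper does not actually prove this lemma; it only records it as a ``routine property (Koopman-Brown theorem)'' with a reference to van den Dries's book. Your argument supplies a correct, self-contained proof via cell decomposition and the o-minimal fibre-dimension inequality applied to the radial projection $\pi:C_i\setminus\{x\}\to S^{m-1}$: since each cell $C_i$ has dimension at most $m-1$, the set $L_i$ of directions with infinite fibre has dimension at most $m-2$, so the cone over $L_i$ --- and hence each $E_i$ --- is nowhere dense. One cosmetic slip: ``finite if and only if it has dimension $0$'' should read ``dimension $\le 0$'' to include the empty set, but you already write $\le 0$ in the displayed formula, so the argument is unaffected.
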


\lsection{Results and Examples}\label{S:Example}
Let us first state the result of F.Clarke \cite{Clarke}  which has 
inspired our  work.  
Recall that if $U\subset \RR^n$ is open  and  $f:U\to\RR^n$ is a Lipschitz 
map,  then $f$ is almost
everywhere differentiable, hence the set the $U\setminus B(f)$ is dense in 
$U$.
 We  define $\overline {co}(f)$ as in Section \ref{S:Preliminaries}.

\begin{thm}[Clarke]\label{THMClarke}
Let  $U$ be a convex open subset of $\RR^n$ and let  $f:U\to\RR^n$  
be a Lipschitz map. 
Assume that  $\overline {co}(f) \cap \Sigma=\emptyset$, then 
$f$ is injective, and  moreover  $f^{-1}$ is 
Lipschitz. 
\end{thm}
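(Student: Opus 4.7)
The plan is to reduce the problem to a global bi-Lipschitz estimate, obtained by integrating $df$ along segments in $U$. Two observations are fundamental: first, by Rademacher's theorem the set $U\setminus B(f)$ has full measure and $df$ is essentially bounded by the Lipschitz constant $L$, so $\overline{co}(f)$ sits inside the closed ball of radius $L$ in $M(n,n)$; hence $\overline{co}(f)$ is \emph{compact}. Second, $\Sigma$ is closed, so by hypothesis and compactness the continuous function $\nu$ from Lemma \ref{LemmaKOS} attains a positive minimum on $\overline{co}(f)$:
\[
c:=\inf\{\nu(A):A\in\overline{co}(f)\}>0.
\]

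The main claim I would prove is: for every $a,b\in U$,
\[
|f(b)-f(a)|\ge c\,|b-a|.
\tag{$\ast$}
\]
This immediately yields injectivity and Lipschitz continuity of $f^{-1}$ with constant $1/c$. To prove $(\ast)$ I would mollify: fix a standard nonnegative mollifier $\varphi_\varepsilon$ with $\int\varphi_\varepsilon=1$, and set $f_\varepsilon := f*\varphi_\varepsilon$, which is well-defined and smooth on the open set $U_\varepsilon:=\{x\in U:\dist(x,\partial U)>\varepsilon\}$. Differentiating under the integral,
\[
df_\varepsilon(x)=\int df(x-y)\,\varphi_\varepsilon(y)\,dy,
\]
which is a continuous convex combination of values of $df$ on points in $U\setminus B(f)$, so $df_\varepsilon(x)\in\overline{co}(f)$ for every $x\in U_\varepsilon$. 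For $a,b\in U_\varepsilon$, convexity of $U_\varepsilon$ fails in general but we can take $\varepsilon$ small enough that the segment $[a,b]$ lies in $U_\varepsilon$ (shrinking to an appropriate subdomain, and approximating points of $U$ from the interior at the end). The fundamental theorem of calculus along the segment gives
\[
f_\varepsilon(b)-f_\varepsilon(a)=\Bigl(\int_0^1 df_\varepsilon(a+t(b-a))\,dt\Bigr)(b-a),
\]
and since $\overline{co}(f)$ is closed and convex, the averaging matrix $M_\varepsilon\in\overline{co}(f)$. Therefore $|f_\varepsilon(b)-f_\varepsilon(a)|\ge\nu(M_\varepsilon)|b-a|\ge c|b-a|$. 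Since $f_\varepsilon\to f$ uniformly on compact subsets of $U$ as $\varepsilon\to 0$, we obtain $(\ast)$ for all $a,b\in U$.

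For global injectivity I would then conclude directly from $(\ast)$, and for the Lipschitz inverse I would use $(\ast)$ together with the Invariance of Domain (Lemma \ref{InvarianceOfDomain}) to see that $f(U)$ is open and $f^{-1}:f(U)\to U$ is $1/c$-Lipschitz. The main obstacle I anticipate is the technical step of justifying that the averaged matrix lands in $\overline{co}(f)$ and handling boundary effects of the mollification cleanly; the \emph{tameness} hypothesis is not needed here because Rademacher and compactness do all the work, but if one wanted to avoid mollification one could instead invoke the tame structure to ensure $f$ is differentiable at all but finitely many points of almost every segment (via Lemma \ref{omiting}) and integrate $df$ directly along such a segment, obtaining the same estimate $(\ast)$ with the average matrix sitting in $\overline{co}(f)$ by a Riemann-sum approximation argument.
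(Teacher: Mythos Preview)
Your proof is correct and follows a classical route via mollification. It is genuinely different from what the paper does. The paper does not prove Theorem~\ref{THMClarke} directly; it is cited as Clarke's result, and the paper then observes that its own Theorem~\ref{THM1} implies it (for tame maps) because compactness of $\overline{co}(f)$ turns the hypothesis $\overline{co}(f)\cap\Sigma=\emptyset$ into a uniform bound $\dist({co}(f),\Sigma)\ge\delta$. The proof of Theorem~\ref{THM1} itself takes a different path from yours: for fixed $x,x'$ and $v=(x'-x)/|x'-x|$ it projects $0$ onto the closed convex set $\overline{{co}_v(f)}=\{Av:A\in\overline{co}(f)\}\subset\RR^n$ to obtain a unit vector $w$ with $\langle w,u\rangle\ge\delta$ for all $u\in{co}_v(f)$, and then integrates the \emph{scalar} function $t\mapsto\langle w,f(x+tv)\rangle$ along the segment, using tameness (rather than Rademacher plus mollification) to guarantee differentiability at all but finitely many points of a generic segment.

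Your approach has the advantage of proving Clarke's theorem in its stated generality (arbitrary Lipschitz maps, no tameness needed); the paper's deduction via Theorem~\ref{THM1} only covers the tame case, though that route in turn extends to non-Lipschitz tame maps where mollification is unavailable. One minor remark: your worry about the convexity of $U_\varepsilon$ is unfounded---when $U$ is convex and open, the set $U_\varepsilon=\{x:\dist(x,\partial U)>\varepsilon\}$ is convex, since $x\in U_\varepsilon$ is equivalent to the closed $\varepsilon$-ball about $x$ lying in $U$, and a convex combination of two such balls again lies in $U$.
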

Our  first global injectivity theorem is the following.

\begin{thm}\label{THM1}
Let  $U$ be a convex open subset of $\RR^m$. 
Assume that $f:U\to\RR^n$, $m\le n$, is tame and continuous. 
If $\dist({co}(f),\Sigma)\ge\delta$, then 
$$
|f(x')-f(x)|\ge\delta|x'-x|\qquad\textrm{for all $x$, $x'\in U$.}
$$
In particular $f$ is injective. If $m=n$, then  $f(U)$ is open, 
$f: U\to f(U)$ is a homeomorphism and $f^{-1}$ satisfies the
Lipschitz condition  with constant $\delta^{-1}$. 
\end{thm}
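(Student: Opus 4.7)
My approach is to integrate $df$ along the line segment $[x,x'] \subset U$ (available by convexity of $U$) and to recognize the resulting ``mean value'' matrix as an element of $\overline{co}(f)$. Since $f$ is tame and continuous, the bad set $B(f)$ is definable and nowhere dense in $U$; Lemma \ref{omiting} then yields, for each fixed $x\in U$, a dense set of $x'\in U$ for which $[x,x'] \cap B(f)$ is finite. I would prove the inequality first for such \emph{good} endpoints $x'$, and then extend it to arbitrary $x'$ by continuity of $f$.

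For a good $x'$, the composition $g(t) = f(x + t(x'-x))$ is tame and continuous on $[0,1]$, and differentiable with $g'(t) = df(x + t(x'-x))(x'-x)$ at every $t$ outside the finite set $S = \{t : x + t(x'-x) \in B(f)\}$. By $C^1$ cell decomposition $g$ is piecewise $C^1$, hence absolutely continuous, and the fundamental theorem of calculus gives
$$
f(x') - f(x) = \int_0^1 df(x + t(x'-x))(x'-x)\, dt = A(x'-x),
\qquad A := \int_0^1 df(x + t(x'-x))\, dt.
$$
Approximating $A$ by Riemann sums whose sample points are chosen outside $S$, each such sum is a convex combination of matrices in $\cD(f)$, so $A \in \overline{co}(f)$. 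Since $\Sigma$ is closed, the hypothesis $\dist(co(f),\Sigma) \ge \delta$ passes to the closure and yields $\nu(A) \ge \delta$; Lemma \ref{LemmaKOS} (which uses $m \le n$) then delivers $|A(x'-x)| \ge \delta |x'-x|$, i.e.\ the desired inequality.

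Extending the estimate to all $x' \in U$ by density of good endpoints and continuity of $f$ is immediate; injectivity is then automatic, and when $m = n$ the Invariance of Domain (Lemma \ref{InvarianceOfDomain}) forces $f(U)$ open and $f : U \to f(U)$ a homeomorphism, while inverting the lower Lipschitz bound produces the $\delta^{-1}$-Lipschitz estimate for $f^{-1}$. The main technical point I anticipate is the justification of the FTC for $g$ --- namely that a tame continuous function on $[0,1]$ which is $C^1$ off a finite set is absolutely continuous --- but in the o-minimal setting this is a routine consequence of cell decomposition, so the conceptual heart of the argument is really the simple observation that the integrated derivative lies in $\overline{co}(f)$.
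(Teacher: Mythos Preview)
Your mean-value approach—integrate $df$ along the segment to get a matrix $A\in\overline{co}(f)$, then apply $\nu(A)\ge\delta$—is different from the paper's and, when it works, arguably more direct. The paper instead fixes $v=(x'-x)/|x'-x|$, considers $co_v(f)=\{Bv:B\in co(f)\}\subset\RR^n$, takes $w$ to be the point of its closure nearest the origin (Lemma~\ref{LemmaK2}), and integrates only the scalar $h(t)=\langle w/|w|,f(x+tv)\rangle$; since $\langle w/|w|,u\rangle\ge|w|\ge\delta$ for every $u\in co_v(f)$, one gets $h'(t)\ge\delta$ a.e.\ and the estimate follows by integrating a single real-valued function.

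Your version, however, has a gap precisely in the non-Lipschitz case the theorem is meant to cover. Since $f$ is not assumed Lipschitz, $df(\gamma(t))$ need not be bounded along the segment, and then your Riemann-sum argument fails (Riemann sums require a bounded integrand); the matrix integral $A$ may not even exist entry-wise. What FTC for $g$ actually delivers is only the \emph{vector} integral $\int_0^1 df(\gamma(t))\,v\,dt=g(1)-g(0)$, one particular linear combination of the columns of $df$; integrability of the remaining columns along $\gamma$ is not guaranteed by continuity of $f$. The paper's scalar projection sidesteps this completely. Your argument is easily repaired by staying at the vector level: the same separation reasoning (applied in $\RR^n$ rather than in $M(m,n)$) shows that $\tfrac{1}{|x'-x|}(f(x')-f(x))\in\overline{co}(\cD_v(f))$, and every element of $co_v(f)$ has norm $\ge\delta$ by Lemma~\ref{LemmaKOS} and the hypothesis, so you conclude without ever forming $A$.
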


 Note that Theorem \ref{THM1}  implies Theorem \ref{THMClarke}. Indeed, 
if $f $ is Lipschitz
the set $\overline {co}(f) $ is compact, hence the condition 
$\overline {co}(f) \cap \Sigma=\emptyset$ implies that
there exists $\delta>0$ such  that $\dist({co}(f),\Sigma)\ge\delta$.
In  the category of tame maps  Theorem \ref{THM1}  is  stronger 
than Clarke's theorem, since
we do not assume that $f$ is Lipschitz or even locally Lipschitz.  Actually 
both theorems can be seen as
non-smooth  variants (with  Lipschitz inverse) of the celebrated Hadamard's 
theorem, see \cite{Hadamard}, for $C^1$ maps.

In the next theorem we will assume that that Clarke's type assumption is 
verified only generically.

\begin{thm}\label{THM12}
Let $U$ be a convex open subset of $\RR^n$ and $f:U\to\RR^n$  a continuous
tame mapping. Assume that there exists a nowhere dense closed $B\subset U$, 
such that
$f$ is a $C^1$  immersion  on $U\setminus B$, that is,
$df(x)$ is non-singular for any $x\in U\setminus B$. Suppose that $f$ 
satisfies 
the following conditions:
\begin{itemize}
\item[$(C_r)$]  $
\Sigma\cap {co}(f,U\setminus B) = \emptyset
$, 
where $ {co}(f,U\setminus B) := co \{df(x):x\in U\setminus B\}$;
\item[(I)] $f$ is injective on $B$.
\end{itemize}

Then $f$ is injective.
\end{thm}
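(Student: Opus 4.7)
I would argue by contradiction. Suppose there exist $x_0\ne x_1$ in $U$ with $f(x_0)=f(x_1)$. Hypothesis (I) immediately excludes the subcase $x_0,x_1\in B$, so at least one endpoint lies in $U\setminus B$; say $x_1\in U\setminus B$. Consider the segment $\sigma(t):=(1-t)x_0+tx_1$ for $t\in[0,1]$.

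The first step is to understand the intersection $\sigma([0,1])\cap B$. Since $B$ is closed, nowhere dense, and definable, Lemma \ref{omiting} together with the standard cell decomposition of tame sets implies that either $\sigma([0,1])\subseteq B$, or the set $\{t\in[0,1]:\sigma(t)\in B\}$ is finite. In the first case both endpoints lie in $B$ and (I) forces $x_0=x_1$, a contradiction. So I may reduce to the situation in which $\sigma(t)\in U\setminus B$ for all but finitely many $t\in[0,1]$; if the segment itself does not have this property I would first perturb $x_0$ within $U\setminus B$ using Lemma \ref{omiting} (applied to $x_1$ in place of the ``base point'') and use continuity of $f$ together with the local injectivity of $f$ near $x_1$ (guaranteed by $df(x_1)\in\GL(n,\RR)$ and the Invariance of Domain, Lemma \ref{InvarianceOfDomain}) to re-locate a distinct point with the same image.

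The core of the argument is then a tame mean-value estimate along $\sigma$. Writing $g:=f\circ\sigma:[0,1]\to\RR^n$, tameness yields that $g$ is continuous, piecewise $C^1$ away from the finite exceptional set, with $g'(t)=df(\sigma(t))(x_1-x_0)$ there. The key lemma of Section \ref{S:KeyLemma} — which is a tame refinement of the mean-value theorem — then supplies a matrix
\[
M\in co(f,U\setminus B)\quad\text{with}\quad M(x_1-x_0)=f(x_1)-f(x_0)=0.
\]
Since $x_1\ne x_0$, such an $M$ is singular, so $M\in\Sigma\cap co(f,U\setminus B)$, contradicting $(C_r)$.

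The main obstacle I anticipate is ensuring that the extracted matrix $M$ lies genuinely in $co(f,U\setminus B)$, and not merely in its closure $\overline{co}(f,U\setminus B)$. A naive integration $M=\int_0^1 df(\sigma(t))\,dt$ only places $M$ in the closure, which would require strengthening $(C_r)$. This is exactly where tameness is essential: the definable structure of $\sigma([0,1])\cap(U\setminus B)$ decomposes into finitely many cells on which $df\circ\sigma$ is tame, permitting a genuinely finite convex combination to represent the integral. Handling this cleanly is the content of the key lemma, and the rest of the argument is a routine chase of cases using (I) and Invariance of Domain.
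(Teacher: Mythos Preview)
Your reduction step --- handling the case $x_0,x_1\in B$ via (I), then using openness of $f$ at $x_1\in U\setminus B$ together with Lemma~\ref{omiting} to perturb until both endpoints lie in $U\setminus B$ and the segment meets $B$ only finitely --- is essentially the paper's. (Minor slip: the dichotomy ``$\sigma([0,1])\subseteq B$ or the intersection is finite'' is false, since a tame subset of $[0,1]$ can be a proper subinterval; but you correctly note that perturbation handles this anyway.)

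The genuine gap is the core step. Lemma~\ref{o-minKeyLemma} is \emph{not} a mean-value theorem and does not produce a matrix $M\in co(f,U\setminus B)$ with $M(x_1-x_0)=0$; it only says $g'(t)\in\overline{\cD_v(f)}$ wherever $g'$ exists. More importantly, your proposed cure for the closure problem is wrong: cell decomposition of $[0,1]$ does \emph{not} let you write $\int_0^1 df(\sigma(t))\,dt$ as a finite convex combination of values of $df\circ\sigma$; the integral of a tame curve lies, in general, only in the closed convex hull of its image. What the paper actually does is project via $\varphi(A)=Av$, observe that $(C_r)$ gives $0\notin co_v(f)\supseteq co(g)$, and then invoke Lemma~\ref{KeyLem}: if $0$ is not in the relative interior of $co(g)$, a supporting hyperplane $w$ gives $\langle w,g'(t)\rangle\ge 0$ for all but finitely many $t$; tameness then forces this to be either identically zero (drop to a hyperplane and induct on $n$) or strictly positive on an interval, whence $\langle w,g(1)-g(0)\rangle>0$. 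So tameness enters not by turning the integral into a finite sum, but by ensuring that a nonnegative definable function with zero integral vanishes off a finite set. Your desired matrix $M$ \emph{can} be extracted a posteriori from the contrapositive of Lemma~\ref{KeyLem}, but that lemma and its inductive separating-hyperplane argument are precisely what your outline is missing.
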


Surprisingly, in the next theorem we allow some singular matrices
in $\overline{co}(f)$.
For any vector $v\in \RR^n$, $|v|=1$,  we put
$$
\Sigma_v(f): = \{A\in M(m,n) :\, Av=0\}\cap \overline {co}(f).
$$
\begin{thm}\label{THM2}
Let $U$ be a convex open subset of $\RR^m$.
A  locally Lipschitz tame mapping $f:U\to\RR^n$ is injective
%(hence open by the invariance of domain)
if it satisfies the following conditions:
\begin{itemize}
\item[$(C_e)$] for any   $v\in \RR^n$, $|v|=1$ the set
$\Sigma_v(f) $
is extremal in $\overline {co}(f)$,
\item[(S)] $f$ is not constant on any segment in $U$.
\end{itemize}
\end{thm}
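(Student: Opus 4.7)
The plan is a proof by contradiction. Suppose $f(x_0)=f(x_1)$ for some distinct $x_0,x_1\in U$; set $v=(x_1-x_0)/|x_1-x_0|$, $L=|x_1-x_0|$, and consider $\phi\colon[0,L]\to\RR^n$ given by $\phi(t)=f(x_0+tv)$. By convexity of $U$ the segment lies in $U$; because $f$ is locally Lipschitz on a neighborhood of this compact segment, $\phi$ is Lipschitz on $[0,L]$, and Rademacher's theorem together with $\phi(0)=\phi(L)$ yields
\[
\int_0^L\phi'(t)\,dt=0.
\]

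The critical technical step is the claim that $\phi'(t)\in\pi(\overline{co}(f))$ for almost every $t$, where $\pi\colon M(m,n)\to\RR^n$ is the linear map $\pi(A)=Av$. Being tame, $\phi$ is $C^1$ off a finite set, and $\phi'(t)$ equals the directional derivative of $f$ at $x_0+tv$ in the direction $v$. In the benign case where the segment meets $B(f)$ only finitely often (available on a dense family of segments by Lemma \ref{omiting}), one has $\phi'(t)=df(x_0+tv)(v)\in\pi(\cD(f))$ directly. In the potentially degenerate case where the entire segment lies inside $B(f)$, one would perturb to nearby parallel segments through the dense definable set $B(x_0)$ supplied by Lemma \ref{omiting}, use the uniform local Lipschitz bound to extract convergent limits of Jacobians $df$ at nearby smooth points, and place $\phi'(t)$ inside $\pi(\overline{co}(f))$ by passing to the limit. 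This tame/Lipschitz identification is the main obstacle of the proof, and is exactly the kind of statement the key lemma of Section \ref{S:KeyLemma} is designed to supply.

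With the claim in hand, hypothesis $(C_e)$ takes over. Since $\Sigma_v(f)=\overline{co}(f)\cap\pi^{-1}(0)$, Lemma \ref{extremalimage} transfers extremality to show that $\{0\}=\pi(\Sigma_v(f))$ is an extremal point of the convex set $\pi(\overline{co}(f))\subset\RR^n$. The essential range of $\phi'$ is bounded by the Lipschitz constant of $f$ near the segment, so its closed convex hull $\bar K$ is a compact convex subset of $\pi(\overline{co}(f))$, and $0$ remains extremal in $\bar K$ by Lemma \ref{extremalsubset}.

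Finally, I interpret the identity $(1/L)\int_0^L\phi'(t)\,dt=0$ as saying that $0$ is the barycenter of the probability measure $\nu=(1/L)\phi'_*\lambda$ supported on the compact convex set $\bar K$. A standard fact in finite-dimensional convexity --- provable by observing that if $\nu\ne\delta_0$ then Carath\'eodory's theorem expresses $0$ as a nontrivial convex combination of points of $\bar K\setminus\{0\}$, violating extremality --- forces $\nu=\delta_0$. Hence $\phi'(t)=0$ for almost every $t$, so $\phi$ is constant on $[0,L]$ and $f$ is constant on the segment $[x_0,x_1]$, contradicting condition $(S)$.
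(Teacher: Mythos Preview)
Your argument is correct and tracks the paper's proof closely: both restrict to the segment $[x_0,x_1]$, invoke the o-minimal key lemma (Lemma~\ref{o-minKeyLemma} / Proposition~\ref{Prop:dirder}) to place the directional derivatives in $\overline{co}_v(f)=\pi(\overline{co}(f))$, and use Lemma~\ref{extremalimage} to deduce that $0$ is extremal there (this is exactly Lemma~\ref{crucialemLip}). The paper makes explicit one step you leave implicit: to ensure $\pi(\overline{co}(f))$ is closed (so that $\bar K$ really sits inside it and extremality transfers), one first replaces $U$ by a compact convex set containing the segment, on which $f$ is genuinely Lipschitz; your phrase ``locally Lipschitz on a neighborhood of this compact segment'' is doing the same work.

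The only genuine difference is the endgame. The paper isolates a separate Lemma~\ref{KeyLem}: if $0\notin\RelInt({co}(g))$ then a supporting functional $w$ (Lemma~\ref{LemmaMinkowski}) and an induction on dimension give $\langle w,g(b)-g(a)\rangle>0$, hence $g(a)\ne g(b)$. You instead assume $\phi(0)=\phi(L)$, read $\frac1L\int_0^L\phi'=0$ as a barycenter, and use the standard fact that an extreme point of a compact convex set supports only the Dirac measure as barycenter, forcing $\phi'\equiv 0$ and contradicting~(S). These are two packagings of the same supporting-hyperplane idea: your barycenter formulation is slightly slicker and avoids the explicit induction of Lemma~\ref{KeyLem}, while the paper's version yields the direct (non-contrapositive) conclusion $f(x)\ne f(x')$. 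Your one-line Carath\'eodory justification is a bit glib (the measure could have an atom at $0$), but the claim itself is standard and proved exactly by iterating supporting hyperplanes through faces---the same mechanism as Lemma~\ref{KeyLem}.
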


\begin{rem}\label{remarkTHM2}
Note that a priori we do not assume that there is at least one
point in $U$ at which the Jacobi matrix of $f$ is invertible.
\end{rem}

\begin{exam}\label{ExampleCubic}
Consider the homeomorphism $f:\RR^2\to\RR^2$ defined by
$(x,y)\mapsto(x^3,y^3)$.
The Jacobi's matrices are given by
$\begin{pmatrix}3x^2&0\\0&3y^2\end{pmatrix}$.
We observe that $\overline {co}(f)$ is the set of all matricies
$\begin{pmatrix}a&0\\0&b\end{pmatrix}$, where $a,b\ge 0$.
For $v=(\pm 1,0)$ and $v=(0,\pm1)$ the corresponding sets $\Sigma_v(f)$ are
the closed halflines in the boundary of $\overline {co} (f) $, so they are 
extremal
in $\overline {co} (f)$. For any other $v$ the set  $\Sigma_v(f)$ is just the 
matrix
$A=0$ which is extremal in $\overline {co} (f)$. Hence the condition $(C_e)$ is
satisfied.
\end{exam}
It seems rather difficult to weaken the assumptions in Theorem \ref{THM2}. To 
see this let us consider the following examples.
\begin{exam}\label{Exampleantirelint}
Consider a continuous  map $f:\RR^2\to\RR^2$ defined by
$f(x,y)= (x+y^3,x)$ for $x\ge 0$ and
$f(x,y)= (x+y^3,0)$ for $x\le 0$.  Clearly this map is not 
injective.  
%since is of rank $1$ on the half-plane $x\ge 0$.
The Jacobi's matrices  of $f$ are of the form
$\begin{pmatrix}1&a\\1&0\end{pmatrix}$, $a\ge 0$ for  $x\ge 0$, and 
$\begin{pmatrix}1&a\\0&0\end{pmatrix}$, $a\ge 0$ for  $x\le 0$.
So $\overline {co} (f)$ is the set of all matrices
$\begin{pmatrix}1&a\\b&0\end{pmatrix}$, where $a\ge 0,\,0\le b \le 1 $.
Clearly $\Sigma \cap \overline{co} (f)$ are all the matricies such that 
$ab=0$. Hence $\Sigma \cap \overline{co} (f)$
does not contain any matrix from the relative interior of $\bar{co} (f)$. 
It can be easily checked that, for almost all $v\in \RR^2$, the set 
  $\Sigma_v(f)$ is just one matrix of the form 
$\begin{pmatrix}1&a\\0&0\end{pmatrix}$, $a>0$
which is  not an extremal point of  $\overline {co} (f)$. 
Hence the condition $(C_e)$ is not 
satisfied.
\end{exam}
\begin{exam}\label{Exampleantirelinta}
Similarly, let
$f:\RR^3\to\RR^3$ be an analytic function defined by
$f(x,y,z)= (y^3,zy^2,x+z^3)$.
Note that
 $f$ is nonsingular precisely
 outside 
$y=0$ and clearly is not constant on any segment. Moreover
$\overline {co} (f) $ has no singular matrices in its relative interior, and 
$f$ is obviously  not injective. 
$A(a)=\begin{pmatrix}0&0&0\\0&0&0\\1&0&a\end{pmatrix} \in 
\overline {co} (f), a \geq 0$, and 
%for $a, b >0, a+b=1$ 
we can use Proposition
\ref {extremalprop}, with $v=(1,0,-1) \in \ker (A(1/2)+A(3/2))$, to contradict 
the extremality 
condition $(C_e)$.

\end{exam}

Note that Theorem
\ref{THM12} holds for any continuous (possibly non-Lipschitz) function, 
however in Theorem \ref{THM2} we need   to assume that $f$ is 
locally Lipschitz. In fact we may drop this assumption but we need to 
strengthen the condition $(C_e)$. Precisely, we need to control the set
$\Sigma_v(f) $ at "infinity".
For any vector $v\in \RR^n$, $|v|=1$,  we put
$$
 \Sigma^c_v(f) : = \{A\in M(m,n) :\, Av=0\}\cap \overline {cone} (f).
$$
Clearly $\Sigma^c_v(f) $ is a closed convex  cone in the 
space
of matrices  $M(m,n)$. Now we can state our next theorem.

\begin{thm}\label{THM21}
Let $U$ be a convex open subset of $\RR^m$.
A  continuous tame mapping $f:U\to\RR^n$ is injective
%(hence open by the invariance of domain)
if it satisfies the following conditions:
\begin{itemize}
\item[$(C^c_e)$] for any   $v\in \RR^n$, $|v|=1$ the set
$ \Sigma^c_v(f) $
is semi-extremal in $\overline {cone}(f)$,
\item[(S)] $f$ is not constant on any segment in $U$.
\end{itemize}
\end{thm}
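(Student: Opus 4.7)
My plan is to argue by contradiction and reduce everything to a one-dimensional analysis along a straight segment. Suppose that $f(x_0) = f(x_1)$ for two distinct points $x_0, x_1 \in U$, set $v := (x_1-x_0)/|x_1-x_0|$, and consider the tame curve $g := f\circ\gamma : [0,1] \to \RR^n$, where $\gamma(t) := x_0 + t(x_1 - x_0)$. Then $g(0) = g(1)$; by hypothesis (S), $g$ is not constant, so I may choose $t_* \in (0,1)$ with $w := g(t_*) - g(0) \neq 0$. Thus $g$ realises opposite displacements $w$ and $-w$ on the sub-segments $[\gamma(0),\gamma(t_*)]$ and $[\gamma(t_*),\gamma(1)]$ respectively.

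The heart of the argument is then to apply the key lemma of Section \ref{S:KeyLemma} (the tame mean-value property adapted to the cone) on each of these two sub-segments in order to produce matrices $A, B \in \overline{cone}(f)$ with $Av = w$ and $Bv = -w$, the positive scalar factors being harmlessly absorbed into the closed convex cone $\overline{cone}(f)$. Once this representation is available, the desired contradiction is immediate: since $w \neq 0$, neither $A$ nor $B$ lies in $\Sigma^c_v(f)$, whereas $(A+B)v = 0$ forces $\tfrac12(A+B) \in \Sigma^c_v(f)$. By the semi-extremality hypothesis $(C^c_e)$, the set $\overline{cone}(f)\setminus \Sigma^c_v(f)$ is convex and hence must contain the midpoint $\tfrac12(A+B)$, a contradiction.

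I expect the main difficulty to lie in the cone mean-value step. Unlike the setup of Theorem \ref{THM2}, $f$ is no longer assumed to be locally Lipschitz, so $df(x)$ may blow up as $x$ approaches the nowhere-dense set $B(f)$, and a priori $B(f)$ might even be dense along the specific segment $[x_0,x_1]$. This is precisely why one must replace the compact-in-the-Lipschitz-case object $\overline{co}(f)$ by its homogeneous analogue $\overline{cone}(f)$: positive rescalings of unbounded directional contributions then remain safely inside the cone. To justify the cone mean-value representation I would combine tameness with Koopman-Brown (Lemma \ref{omiting}) to reach nearby segments meeting $B(f)$ only in finitely many points, decompose the displacement into finitely many smooth pieces lying, after rescaling, in $\overline{cone}(f)\cdot v$, and then pass to the limit back to $[x_0,x_1]$ using continuity of $f$ together with closedness of the cone.
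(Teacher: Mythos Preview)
Your overall strategy is sound and in fact sidesteps the paper's Lemma~\ref{KeyLem}: the paper pushes condition~$(C^c_e)$ down to $\RR^n$ via Lemma~\ref{crucialemnonLip} (so that $0$ is extremal in $\overline{co}_v(f)$), observes $co(g)\subseteq\overline{co}_v(f)$, and then invokes the separating-hyperplane induction of Lemma~\ref{KeyLem} to force $g(0)\ne g(1)$. Your midpoint argument with opposite displacements $\pm w$ is a legitimate and more direct substitute for that last step; it is essentially the contrapositive of Proposition~\ref{extremalprop}.

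However, your plan for the ``cone mean-value step'' is both over-engineered and, as written, leaky. Koopman--Brown (Lemma~\ref{omiting}) moves the \emph{endpoints} of the segment, hence changes the direction $v$ to some nearby $v_k$; the displacements you produce then lie in $\overline{cone}(f)\cdot v_k$, and passing to the limit $v_k\to v$ does \emph{not} obviously land you in $\overline{cone}(f)\cdot v$, because the lifting matrices $A_k$ may run off to infinity inside the cone. The detour is unnecessary: Lemma~\ref{o-minKeyLemma} applies directly on the original segment $[x_0,x_1]$ and gives $g'(t)\in\overline{\cD_v(f)}$ for all but finitely many $t$ (tameness of $g$ guarantees $g'$ exists off a finite set and that the fundamental theorem of calculus holds). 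Averaging then yields $w/(t_*|x_1-x_0|)\in\overline{co}_v(f)\subseteq\overline{cone}_v(f)$, and likewise for $-w$.

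One point you pass over silently: to lift $w\in\overline{cone}_v(f)$ back to a matrix $A\in\overline{cone}(f)$ with $Av=w$ you need $\overline{cone}_v(f)=\varphi(\overline{cone}(f))$, where $\varphi(A)=Av$. This is exactly the content of Lemma~\ref{extremalconimage} (linear images of closed convex cones are closed) together with Proposition~\ref{Prop:dirder}; without it the image $\varphi(\overline{cone}(f))$ is only a convex cone, not obviously closed, and your lift could fail. Alternatively, you can avoid the lift altogether by applying Lemma~\ref{extremalimage} to push $(C^c_e)$ down to $\RR^n$ (this is the paper's Lemma~\ref{crucialemnonLip}): then $\{0\}$ is extremal in $\overline{cone}_v(f)$, and the presence of both $w$ and $-w$ there with $w\ne0$ is already the contradiction.
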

\begin{rem} Actually   Theorem \ref{THM21} implies Theorem \ref{THM2}. 
To see this, note that for
a Lipschitz function, the set $\overline {co} (f)$ is compact and convex. 
It follows from Lemma \ref {generatingclosedcones} that $\overline {cone}(f)$ 
is a closed 
convex cone. Condition $(C^c_e)$ implies Condition $(C_e)$.
\end{rem}

%In the end of this  section 
We point out that the 
Example \ref{biLipshitzHommeoWithoutConvexRegularity}, a  biLipschitz 
homeomorphism 
$f:\RR^2\to\RR^2$,   does not satisfy the condition $(C_e)$.
This motivates  another generalization 
of the inverse mapping theorem. 
%Now we state another generalization of inverse mapping theorem, which
%works for Example \ref{biLipshitzHommeoWithoutConvexRegularity}. 
\begin{thm}\label{THM3}
Let $f:(\RR^n,0)\to(\RR^n,0)$ be a tame continuous map-germ.
%Let $(x,\dots,x_n)$, $(y_1,\dots,y_n)$ be 
%systems of coordinate of the source and the target.
%We set $f_j=y_j\comp f$.  
Then $f$ is a homeomorphism, 
if there are  coordinates  systems 
$(x_1,\dots,x_n)$, $(y_1,\dots,y_n)$ of the source and  the target 
satisfying the following conditions:
%for $j=1,\dots,n$.
\begin{itemize}
%\item[($R_1$)] 
%There are positive constants $K_1$, $L_1$ with
%$K_1\le\det\pd{f_1}{x_1}\le L_1$ except on $B(f)$. 
%\item[($R_2$)] 
%There are positive constants $K_2$, $L_2$ with
%$K_2\le\det\pd{(f_1,f_2)}{(x_1,x_2)}\le L_2$ except on $B(f)$.
%\item[] \ \qquad $\vdots$ \hfill
%\item[($R_{n-1}$)] 
%There are positive constants $K_{n-1}$, $L_{n-1}$ with
%$K_{n-1}\le
%\det\pd{(f_1,\dots,f_{n-1})}{(x_1,\dots,x_{n-1})}
%\le L_{n-1}$ except on $B(f)$.  
\item[($R_j$)] 
for  each $j=1,\dots,n-1$,
there are positive constants $K_j$, $L_j$ such that 
$$
K_j\le
\det\pd{(f_1,\dots,f_j)}{(x_1,\dots,x_j)}
\le L_j
\qquad
\textrm{except on $B(f)$}; 
$$ 
\item[($R_n$)] 
there is a positive constant $K_n$ such that 
$$
K_n\le\det\pd{(f_1,\dots,f_n)}{(x_1,\dots,x_n)}\qquad
\textrm{ except on $B(f)$.} 
$$
\end{itemize}
\end{thm}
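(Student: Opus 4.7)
The plan is to argue by induction on a chain of coordinate replacements. For $k=0,1,\dots,n$ I introduce
$$
\Phi_k : (U,0) \to (\RR^n,0), \qquad \Phi_k(x) := (f_1(x),\dots,f_k(x),x_{k+1},\dots,x_n),
$$
so that $\Phi_0 = \id$ and $\Phi_n = f$. I will show, by induction on $k$, that each $\Phi_k$ is a homeomorphism germ at the origin; the case $k=n$ gives the theorem.

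Assuming inductively that $\Phi_{k-1}$ is a homeomorphism, the composition $\tilde\Phi_k := \Phi_k \circ \Phi_{k-1}^{-1}$ is the identity on every coordinate except the $k$-th, which it sends to $h_k := f_k \circ \Phi_{k-1}^{-1}$. Hence $\tilde\Phi_k$, and so $\Phi_k = \tilde\Phi_k \circ \Phi_{k-1}$, is a homeomorphism as soon as the one-dimensional slice $x_k \mapsto h_k(\cdot,x_k,\cdot)$ is strictly monotonic for every choice of the frozen coordinates. The key computation is a Schur complement identity: writing $M_j(f) := \det \pd{(f_1,\dots,f_j)}{(x_1,\dots,x_j)}$ with $M_0 := 1$, one checks that wherever $f$ is differentiable
$$
\frac{\partial h_k}{\partial x_k} \;=\; \frac{M_k(f)}{M_{k-1}(f)}
$$
(evaluated at the $\Phi_{k-1}$-preimage of the point). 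Combined with $(R_{k-1})$ and $(R_k)$ this yields uniform slope bounds $K_k/L_{k-1} \le \partial h_k/\partial x_k \le L_k/K_{k-1}$ for $1\le k \le n-1$, and $\partial h_n/\partial x_n \ge K_n/L_{n-1}$, with the convention $K_0 = L_0 := 1$.

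With these bounds in hand I would invoke Theorem \ref{THM1} in dimension one along each slice to conclude strict monotonicity, so that $\tilde\Phi_k$ is injective; Invariance of Domain (Lemma \ref{InvarianceOfDomain}) then upgrades injectivity to the homeomorphism property, closing the induction and producing $\Phi_n = f$ as a homeomorphism germ at $0$.

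The main obstacle is the technical passage from a derivative bound that holds only off the exceptional set $B(h_k)$ to strict monotonicity along \emph{every} one-dimensional slice, since a priori this set could meet some slices in a large piece. Here tameness and Lemma \ref{omiting} (Koopman--Brown) guarantee that a dense family of slices meets $B(h_k)$ in a nowhere dense set, so Theorem \ref{THM1} produces a uniform lower Lipschitz constant on these slices; because the constant is independent of the frozen coordinates, continuity of $h_k$ extends the bound to every slice. Tameness of $\Phi_{k-1}^{-1}$, and hence of $h_k$, is inherited inductively from the o-minimal structure.
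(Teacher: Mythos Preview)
Your inductive scheme via $\Phi_k$ and the Schur complement identity $\partial h_k/\partial x_k = M_k/M_{k-1}$ is exactly the paper's argument (their $\phi_k$ and Lemma \ref{LemJac}), and the derived slope bounds are the same. The only divergence is in how you conclude strict monotonicity along every slice. The paper packages this as Lemma \ref{Cor1}, whose proof invokes the o-minimal key Lemma \ref{o-minKeyLemma} directly: that lemma says the derivative of $g(t)=h_k(\dots,t,\dots)$, wherever it exists, lies in the closure of $\{\partial h_k/\partial x_k(x):x\notin B(h_k)\}$, so the lower bound $K_k/L_{k-1}$ transfers to \emph{every} slice in one stroke, with no density argument needed.

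Your route via Theorem \ref{THM1} on generic slices plus a limiting argument also works, but two small points are worth noting. First, Lemma \ref{omiting} is stated for segments emanating from a fixed point, not for the family of lines parallel to the $x_k$-axis; what you actually need is the o-minimal fibre-dimension formula (generic fibres of the projection off $x_k$ meet $B(h_k)$ in dimension $<1$, hence finitely), which is equally standard but a different statement. Second, even on a good slice there may be finitely many points where $g$ is differentiable but $h_k$ is not; you should remark that one-variable definability makes $g'$ piecewise continuous, so the bound extends across these isolated points. With these clarifications your argument is complete, though the paper's direct appeal to Lemma \ref{o-minKeyLemma} is the cleaner way to cross this gap.
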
 
\begin{rem} Note that  a global version of this result is wrong. An analytic counter-exemple was 
given by Gale and Nikaido \cite{GaleNikaido}, when answering  a question of Samuelson whether the
strict positivity of upper-left principal minors  is a sufficient condition for the univalence.  
\end{rem}
We remark that $f$ and $f^{-1}$ may not be Lipschitz, as Example
\ref{NonLip} shows. 
We also remark that 
Condition ($R_n$) itself is not sufficient to ensure  the injectivity, 
as Example \ref{3sheet} shows. 
This theorem applies to Example
\ref{biLipshitzHommeoWithoutConvexRegularity},
but not to Example \ref{ExampleCubic}. 
So different generalizations are desirable. 
\begin{thm}\label{THM4}
A continuous tame map-germ $f:(\RR^n,0)\to(\RR^n,0)$ 
is a homeomorphism, 
if there are systems of coordinates 
$(x_1,\dots,x_n)$, $(y_1,\dots,y_n)$ of the source and the target 
which satisfy the following conditions:
%($F_j$), ($P_j$) for $j=1,\dots,n$.
\begin{itemize}
\item[($F_j$)]  for each  $j=1,\dots,n$
the mapping $\phi_j:(\RR^n,0)\to(\RR^n,0)$ defined by
$$
x=(x_1,\dots,x_n)\mapsto(f_1(x),\dots,f_j(x),x_{j+1},\dots,x_n)
$$ 
is finite; 
\item[($P_j$)] for each  $j=1,\dots,n$ we have
$0\le\det\pd{(f_1,\dots,f_j)}{(x_1,\dots,x_j)}$ except on $B(f)$. 
\end{itemize}
\end{thm}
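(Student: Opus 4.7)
The plan is to prove by induction on $j = 0, 1, \dots, n$ that $\phi_j$ is a homeomorphism-germ at $0$. Since $\phi_0 = \id$ and $\phi_n = f$, this will establish the theorem. The base case is trivial, so I turn to the inductive step.

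Assume $\phi_{j-1}$ is a homeomorphism-germ and set
\[
g_j := \phi_j \circ \phi_{j-1}^{-1}, \qquad g_j(y) = (y_1,\dots,y_{j-1}, h(y), y_{j+1},\dots,y_n),
\]
where $h(y) := f_j(\phi_{j-1}^{-1}(y))$. Since $g_j$ fixes all but the $j$-th coordinate, establishing that $g_j$ is a homeomorphism-germ reduces to showing that, for every $(y_1^0,\dots,y_{j-1}^0,y_{j+1}^0,\dots,y_n^0)$ close to $0$, the continuous tame one-variable function $y_j \mapsto h(y_1^0,\dots,y_j,\dots,y_n^0)$ is strictly increasing.

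For the monotonicity, the factorization $\phi_j = g_j \circ \phi_{j-1}$ combined with the triangular shape of $g_j$ yields, at any regular point of $f$,
\[
\det \phi_j'(x) = \frac{\partial h}{\partial y_j}\bigl(\phi_{j-1}(x)\bigr)\cdot \det \phi_{j-1}'(x).
\]
Conditions $(P_j)$ and $(P_{j-1})$ (with $\det \phi_0' := 1$) give $\det \phi_j', \det \phi_{j-1}' \ge 0$; the inductive hypothesis forces $\det \phi_{j-1}' > 0$ on a dense open set, for otherwise $\phi_{j-1}$, being an open map by Invariance of Domain, would collapse a top-dimensional tame piece of its source to something lower-dimensional in its image. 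Hence $\partial h/\partial y_j \ge 0$ on a dense open subset of the target. A tame continuous one-variable function whose derivative is non-negative outside a finite set is non-decreasing; applying this on those lines parallel to the $y_j$-axis whose intersection with the bad set is finite---a dense collection of lines by Lemma \ref{omiting}---and extending by continuity, shows that $h$ is non-decreasing in $y_j$ for every choice of the remaining coordinates.

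Strict monotonicity then follows from $(F_j)$: if $h$ were constant on some interval in the $j$-th slot, then $\phi_j^{-1}$ of the common image point would contain the image of that interval under the homeomorphism $\phi_{j-1}^{-1}$, hence would be infinite, contradicting finiteness. This gives injectivity of $g_j$ on a neighbourhood of $0$; Invariance of Domain (Lemma \ref{InvarianceOfDomain}) then upgrades $g_j$, and therefore $\phi_j = g_j \circ \phi_{j-1}$, to a homeomorphism-germ, completing the induction. The main obstacle I anticipate is the density statement $\det \phi_{j-1}' > 0$ on a dense open subset of the source: this must be extracted from $(P_{j-1})$ and the inductive hypothesis using a tame-Sard-type argument, and the resulting genericity carefully transferred between $x$- and $y$-coordinates through the merely continuous tame homeomorphism $\phi_{j-1}^{-1}$.
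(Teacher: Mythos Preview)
Your argument is correct and follows the paper's strategy almost verbatim: the same induction via $g_j=\phi_j\circ\phi_{j-1}^{-1}$, the same Jacobian-ratio computation (the paper's Lemma~\ref{LemJac}), and the same use of finiteness for strict monotonicity, which the paper isolates as Lemma~\ref{Cor2}. The density concern you flag is genuine but easily dispatched---a tame homeomorphism with identically singular differential on an open $C^1$ piece would, by Sard, map that piece to a set of empty interior, contradicting openness---and in fact the paper's own proof glosses over this point. One minor remark: Lemma~\ref{omiting} as stated treats segments emanating from a fixed basepoint rather than families of parallel lines; the parallel-line version you actually need follows just as routinely from cell decomposition, or alternatively you can sidestep the issue by invoking the paper's key Lemma~\ref{o-minKeyLemma} directly, which is how the paper propagates the sign condition from the dense open set to every directional derivative along the relevant axis.
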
 
%This result looks optimal, since 
We remark that Conditions ($F_j$) cannot be dropped in Theorem \ref{THM4}, 
as we see in Example \ref{NonProper}.

\begin{exam}\label{NonProper}
Consider the mapping $f:(\RR^2,0)\to(\RR^2,0)$ defined by 
$$(x,y)\mapsto (x,x^2y).$$ 
Then the Jacobi's matrix of $f$ is
$$
df(x,y)
=
\begin{pmatrix}
1&0\\
2xy&y^2 
\end{pmatrix}.
$$  
%and we observe that 
%$$
%\pd{f_1}{x}=1,\qquad
%\det\pd{(y_1,y_2)}{(x,y)}=y^2\ge0. 
%$$
The Condition $(C_e)$ is satisfied.
%that is,  $\RelInt({co}(f))$ does not contain singular matrix.
Indeed, observe that $\overline {co} (f)$ is the set of all matrices
$\begin{pmatrix}1&0\\a&b\end{pmatrix}$, where $a\in \RR$, $b\ge 0$.
For  $v=(0,\pm1)$ the corresponding set $\Sigma_v(f)$ is
the line in the boundary of $\overline {co} (f) $, so it is extremal
in $\overline {co} (f)$. For any other $v$  the set $\Sigma_v(f)$ is  empty
so is extremal in $\overline {co} (f)$. 
However, Condition (S) is not satisfied, 
since the image of $x$-axis is just the origin.
Obviously the map $f$ is  not a homeomorphism. 

Note that Condition ($R_1$) is satisfied but not Condition ($R_2$). 
Also note that Conditions ($F_1$), ($P_1$), ($P_2$) are satisfied, 
but Condition ($F_2$) is not satisfied. 
\end{exam}
\begin{exam}
Let $U$ be a convex open neighbourhood of $0$ in $\RR^2$. 
Consider the mapping $f:U\to\RR^2$ defined by 
$$
(x,y)\mapsto (x(y-x^2)^2,y-x^2).
$$ 
The Jacobi matrix of $f$ is
$$
df(x,y)
=
\begin{pmatrix}
a_{11}&a_{12}\\
a_{21}&a_{22}
\end{pmatrix}
=
\begin{pmatrix}
(x^2-y)(5x^2-y)&2x(y-x^2)\\
-2x&1
\end{pmatrix}. 
$$  
We show that $\RelInt({co}(f))$ contains singular matrices. 
Since $a_{11}=(x^2-y)(5x^2-y)$ changes its sign, it is enough to show
that the relative interior of ${co}(f)\cap\{a_{11}=0\}$ contains singular
matrices,  matrices with  $a_{12}a_{21}=0$. 
Since 
$$
\cD(f)\cap\{a_{11}=0\}
=\biggl\{
\begin{pmatrix}
0&0\\
-2x&1
\end{pmatrix}:(x,x^2)\in U
\biggr\}
\cup\biggl\{
\begin{pmatrix}
0&8x^3\\
-2x&1
\end{pmatrix}
:(x,5x^2)\in U\biggr\}, 
$$
 the relative interior of its convex hull contains singular matrices. 
This is clearly not a homeomorphism, 
the image of $\{y=x^2\}$ is just the origin. 
\end{exam}
\begin{exam}
Consider the mapping $f:\RR^2\to\RR^2$ defined by 
$$
(x,y)\mapsto(y_1,y_2)=(xy^{2/3},y^{1/3}).
$$
Its Jacobi's matrix is
$$
\begin{pmatrix}
y^{2/3} & \frac23xy^{-1/3}\\
0& \frac13y^{-2/3}
\end{pmatrix}
$$
and we have 
$$
\pd{y_1}{x}=y^{2/3}\ge0, \quad \pd{(y_1,y_2)}{(x,y)}=\frac13>0.
$$ 
Note the image of $x$-axis is the origin hence $f$ is not
injective.
We remark that this example satisfies 
Conditions $(C_e)$, ($R_2$), ($P_1$), ($P_2$), 
but not Conditions (S), ($R_1$) and  ($F_2$). 
\end{exam}
\begin{exam}
Consider the mapping $f:\RR^2\to\RR^2$ defined by 
$$
(x,y)\mapsto(y_1,y_2)=(xy^{1/3},y^{2/3}).
$$
The Jacobi's matrix is
$$
\begin{pmatrix}
y^{1/3}&\frac13xy^{-2/3}\\
0&\frac23y^{-1/3}&
\end{pmatrix}
$$
and its Jacobian determinant is $2/3$. 
Note the image of $f$ is $\{y_2>0\}\cup\{(0,0)\}$, which is not dense.  
We remark that this example satisfies 
Conditions $(C_e)$, ($R_2$), ($P_1$), ($P_2$), 
but not Conditions (S) and ($R_1$), ($F_2$). 
\end{exam}

Let $P:\RR^n-\{0\}\to (0,\infty)$ denote a  function satisfying
%continuous tame function with 
$$
P(t^{w_1}x_1,\dots,t^{w_n}x_n)=t^dP(x_1,\dots,x_n),\qquad t\in (0,\infty)
$$
where $w_1,\dots,w_n,d$ are real numbers. Let $F_P:\RR^n-\{0\}\to\RR^n-\{0\}$ be
%For a real number $a$ consider 
 the mapping defined by 
$$
F_P(x_1,\dots,x_n)=
(P(x)^{w_1}x_1,\dots,P(x)^{w_n}x_n).
$$
% $F_a:\RR^n-\{0\}\to\RR^n-\{0\}$, defined by 
% $$
% F_a(x_1,\dots,x_n)=
% (P(x)^{aw_1}x_1,\dots,P(x)^{aw_n}x_n).
% $$
If $(d+1)(d'+1)=1$ define $Q(x)$  by $Q(x)P^{(d'+1)}(x)\equiv 1$ and accordingly
 the corresponding map $F_Q$.

\begin{prop}\label{PropP}
%If $abd+a+b=0$, then $F_a^{-1}=F_{b}$.
% If $(d+1)(d'+1)=1$ consider $Q(x)$ defined by $Q(x)P^{(d'+1)}(x)=1$ and the corresponding $F_Q$.
$F_P^{-1}=F_Q$,  in particular $F_P$ is a homeomorphism whenever $P$ is continuous. 
If moreover   $(d+1)>0, \, w_i >0, \, i=1,\dots, n$, 
% Assume moreover  that there are constants $k_1, k_2$, such that 
% $0<k_1\leq P(x)\leq k_2, \,x\in U-\{0\}$, 
% $U$ a neighbourhood of the origin,
%$w_i >0$, $i=1,\dots, n$ and  $ad>-1$, 
then  $F_P$ extends to a global homeomorphism of $\RR^n$.
\end{prop}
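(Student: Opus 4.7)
The plan has two parts. For the first assertion, that $F_P^{-1}=F_Q$, I would verify directly that $F_Q\circ F_P=\id$ on $\RR^n\setminus\{0\}$ by using the quasi-homogeneity of $P$. Setting $y=F_P(x)$, the key observation is that applying the defining identity of $P$ with $t=P(x)$ yields $P(y)=P(x)^{d+1}$. Combined with the defining relation $Q(y)P(y)^{d'+1}=1$ and the numerical hypothesis $(d+1)(d'+1)=1$, this gives $Q(y)=P(x)^{-1}$, whence the identity $F_Q(y)_i=Q(y)^{w_i}y_i=P(x)^{-w_i}\cdot P(x)^{w_i}x_i=x_i$ follows by direct cancellation. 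The symmetric statement $F_P\circ F_Q=\id$ is obtained the same way after checking that $Q$ is itself quasi-homogeneous of degree $d'$ with the same weights $(w_1,\dots,w_n)$, which is a routine substitution (the exponent $-d(d'+1)$ equals $d'$ precisely when $(d+1)(d'+1)=1$). Continuity of $F_P$ and $F_Q$ on $\RR^n\setminus\{0\}$ is immediate from the continuity of $P$, and $F_P$ clearly sends $\RR^n\setminus\{0\}$ into itself, so $F_P$ is already a homeomorphism of $\RR^n\setminus\{0\}$.

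For the global extension, I would introduce the weighted gauge $\|x\|_w:=\max_i|x_i|^{1/w_i}$, well-defined because all $w_i>0$. It satisfies the homogeneity relation $\|(t^{w_1}x_1,\dots,t^{w_n}x_n)\|_w=t\,\|x\|_w$ for $t>0$, and is topologically equivalent to the Euclidean norm near the origin in the sense that $\|x\|_w\to 0$ iff $x\to 0$. Writing any nonzero $x$ in the normalized form $x_i=t^{w_i}\xi_i$ with $t=\|x\|_w$ and $\|\xi\|_w=1$, a one-line computation gives
$$
\|F_P(x)\|_w=t^{d+1}P(\xi).
$$
The weighted unit sphere $\{\xi:\|\xi\|_w=1\}$ is compact and avoids the origin, so $P$ is bounded above there by some constant $C>0$, yielding $\|F_P(x)\|_w\le C\,\|x\|_w^{d+1}$. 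Under the hypothesis $d+1>0$, this forces $F_P(x)\to 0$ as $x\to 0$, so setting $F_P(0):=0$ produces a continuous extension. Because $(d+1)(d'+1)=1$ together with $d+1>0$ also forces $d'+1>0$, the identical argument extends $F_Q$ continuously at the origin and furnishes a continuous inverse. The extended $F_P$ is therefore a homeomorphism of all of $\RR^n$.

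The proof is essentially a bookkeeping exercise in quasi-homogeneity, so no step is genuinely hard. The only conceptual point is that the Euclidean norm is the wrong object with which to control the behaviour of $F_P$ at the origin because it treats all coordinates equally; the natural gauge is $\|\cdot\|_w$ adapted to the weights $w_i$. Once that substitution is made, the sign conditions $w_i>0$ and $d+1>0$ are exactly what is required for $\|\cdot\|_w$ to be a topologically valid gauge near $0$ and for the exponent in the key estimate to be strictly positive.
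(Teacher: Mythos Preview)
Your proof is correct and follows essentially the same route as the paper. For the inverse identity you use the quasi-homogeneity of $P$ to obtain $P(y)=P(x)^{d+1}$ and then the defining relation of $Q$; the paper instead applies the quasi-homogeneity of $Q$ directly to compute $Q(F_P(x))=P(x)^{d'}Q(x)$, but the two computations are equivalent line-for-line. For the extension at the origin the paper simply asserts $\lim_{x\to 0}F_P(x)=0$ when $(d+1)w_i>0$, whereas your weighted gauge $\|x\|_w=\max_i|x_i|^{1/w_i}$ supplies the missing justification by reducing to a bound on the compact weighted sphere; this is a natural elaboration rather than a different argument.
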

\begin{proof}
This follows from the following computation: 
\begin{align*}
% F_b\comp F_a(x)=
% &F_b(P(x)^{aw_1}x_1,\dots,P(x)^{aw_n}x_n)\\
% =&\Bigl(
% P\bigl(P(x)^{aw_1}x_1,\dots,P(x)^{aw_n}x_n)\bigr)^{bw_i}(P(x)^{aw_i} x_i)
% \Bigr)_{i=1,\dots,n}\\
% =&\Bigl(\bigl(
% P(x)^{ad}P(x)\bigr)^{bw_i}P(x)^{aw_i} x_i\Bigr)_{i=1,\dots,n}\\
% =&\bigl(
% P(x)^{(abd+b+a)w_i} x_i\bigr)_{i=1,\dots,n}\\
% =&\bigl(x_i\bigr)_{i=1,\dots,n}.
% \end{align*}
% Finally observe that $\lim_{x\to 0} F_a= 0$ if and only if
% $daw_i +w_i>0$ for all $i=1,\dots, n$ which, under assumption $w_i>0$,
%  is equivalent to the condition $da+1>0$.
F_Q\comp F_P(x)=
&F_Q(P(x)^{w_1}x_1,\dots,P(x)^{w_n}x_n)\\
=&\Bigl(
Q\bigl(P(x)^{w_1}x_1,\dots,P(x)^{w_n}x_n)\bigr)^{w_i}(P(x)^{w_i} x_i)
\Bigr)_{i=1,\dots,n}\\
=&\Bigl(\bigl(
P(x)^{d'}Q(x)\bigr)^{w_i}P(x)^{w_i} x_i\Bigr)_{i=1,\dots,n}\\
=&\bigl(
P(x)^{(d'+1)w_i} Q(x)^{w_i}x_i\bigr)_{i=1,\dots,n}\\
=&\bigl(x_i\bigr)_{i=1,\dots,n}.
\end{align*}
Finally observe that $\lim_{x\to 0} F_P= 0$  if
$dw_i +w_i>0$ for all \, $i=1,\dots, n$.
%  which, under assumption $w_i>0$,
%  is equivalent to the condition $da+1>0$.
\end{proof}

\begin{exam}\label{biLipshitzHommeoWithoutConvexRegularity}
Consider the mapping $f:(\RR^2,0)\to(\RR^2,0)$ defined by 
$$
f(x,y)=
\begin{cases}
(P x,P y)&(x,y)\ne(0,0)\\
(0,0)&(x,y)=(0,0)
\end{cases}
\quad\textrm{where}\quad
P=\frac{x^4-x^2y^2+2y^4}{x^4-x^2y^2+y^4}.
$$
By Proposition \ref{PropP}, we see that this is a homeomorphism. 
Remark that the inverse is given by 
$$
(x,y)\mapsto
\begin{cases}
(P^{-1} x,P^{-1} y)&(x,y)\ne(0,0),\\
(0,0)&(x,y)=(0,0).
\end{cases}
$$ 
The Jacobi's matrix of $f$ is 
{\scriptsize
$$
\begin{pmatrix}
a_{11}&a_{12}\\
a_{21}&a_{22}
\end{pmatrix}
=\frac{1}{(x^4-x^2y^2+y^4)^2}
\begin{pmatrix}
(x^2-2y^2)(x^6-y^6)&
2x^3y^3(2x^2-y^2)\\
2xy^5(y^2-2x^2)
&x^8-2x^6y^2+8x^4y^4-5x^2y^6+2y^8
\end{pmatrix},
$$}
and we observe each entry is bounded. Hence $f$ is Lipschitz. 
Similarly we can show $f^{-1}$ is Lipschitz,  
so $f$ is a biLipschitz homeomorphism. 
We observe that  
{\footnotesize
$$
\pd{f_2}{y}=\frac
{(x^2-y^2)^4+y^4(x^2-y^2)^2+x^2y^2(2x^4+x^2y^2+y^4)}
{(x^4-x^2y^2+y^4)^2}\ge1, 
\textrm{ and }
\det(df)=P^2\ge1.
$$} 
We  use Theorem \ref{THM3}  to show that $f$ 
is a homeomorphism germ. 
Consider the segment 
$$
\gamma_\varepsilon:
[-2\varepsilon,2\varepsilon]\to\RR^2,\qquad
t\mapsto(t,\pm\varepsilon)
\qquad\textrm{for $0<\varepsilon\ll1$}.
$$
The images of $f\comp\gamma_{\varepsilon}$ look like the following:
\vskip 1cm
\input{figureA}
\vskip 1cm
We now show that $\RelInt({co}(f))$ contains singular matrices, hence in 
particular
$f$ does not satisfy the condition $(C_e)$.
First one can see that the affine hull of ${co}(f)$ is $M(2,2)$.
Indeed the following matrices from ${co}(f)$ are affine independent 
%\begin{figure}[htbp]
%\begin{center}

%\caption{default}
%\label{default}
%\end{center}
%\end{figure}
%\begin{figure}[htbp]
%\begin{center}
%\epsfile{file=,scale=0.8}
%\caption{{\bf default}}
%\label{default}
%\end{center}
%\end{figure}

$$
\begin{pmatrix}
1&0\\
0&1
\end{pmatrix},\quad
\begin{pmatrix}
0&-2\\
2&4
\end{pmatrix},\quad
\begin{pmatrix}
0&2\\
-2&4
\end{pmatrix},
\quad
\begin{pmatrix}
0&-\frac{2\sqrt{2}}{3}\\
\frac{4\sqrt{2}}3&\frac83
\end{pmatrix},\quad
\begin{pmatrix}
0&\frac{2\sqrt{2}}{3}\\
-\frac{4\sqrt{2}}3&\frac83
\end{pmatrix}
$$
Next we observe that 
$a_{11}=\frac{(x^2-2y^2)(x^6-y^6)}{(x^4-x^2y^2+y^4)^2}$ changes its
 sign. So it is enough to show that the relative interior of 
${co}(f)\cap\{a_{11}=0\}$ contains a singular matrix. 
If $a_{11}=0$, then $y=\pm x, \pm x/\sqrt{2}$, and we observe
that the last four matrices from the list above
% $$
% \begin{pmatrix}
% 0&-2\\
% 2&4
% \end{pmatrix},\quad
% \begin{pmatrix}
% 0&2\\
% -2&4
% \end{pmatrix},
% \quad
% \begin{pmatrix}
% 0&-\frac{2\sqrt{2}}{3}\\
% \frac{4\sqrt{2}}3&\frac83
% \end{pmatrix},\quad
% \begin{pmatrix}
% 0&\frac{2\sqrt{2}}{3}\\
% -\frac{4\sqrt{2}}3&\frac83
% \end{pmatrix}
% $$CCy
are realized as Jacobi's matrices precisely in $\{a_{11}=0\}$.
Since the singular matrices in the space $\{a_{11}=0\}$
satisfy $\{a_{12}a_{21}=0\}$, 
the relative interior of the convex hull of those four matrices 
contains singular matrices. This implies that
$\RelInt({co}(f))$ contains singular matrices.
\end{exam}
This example shows that Condition $(C_e)$
is not invariant by biLipschitz mappings.  
\begin{exam}\label{NonLip}
Let $f:(\RR^2,0)\to(\RR^2,0)$ be a mapping defined by
$$
f(x,y)=
\begin{cases}
(P^3 x,P^2 y)&(x,y)\ne(0,0)\\
(0,0)&(x,y)=(0,0)
\end{cases}
\quad\textrm{where}\quad
P=\frac{2x^4+y^6}{x^4+y^6}
$$
By Proposition \ref{PropP}, we see that this is a homeomorphism. 
We obtain that 
$$
df=
\frac{(2x^4+y^6)}{(x^4+y^6)^4}
\begin{pmatrix}
(2x^4+y^6)(2x^8+15x^4y^6+y^{12})&-18x^5y^5(2x^4+y^6)\\
8x^3y^7(x^4+y^6)&(x^4+y^6)(2x^8-9x^4y^6+y^{12})
\end{pmatrix}, 
$$
and observe that all components but $\pd{f_2}{x}$ are bounded,
so this mapping is not Lipschitz. 
Moreover, we observe that  
$$
\pd{f_1}{x}=\frac{(2x^4+y^6)(2x^8+15x^4y^6+y^{12})}{(x^4+y^6)}\ge1,
\textrm{ and }
\det(df)=P^5\ge1.
$$
So we can use Theorem \ref{THM3} also to show that $f$ 
is a homeomorphism germ. Consider the segment 
$$
\gamma_{x_0}:
[-\varepsilon,\varepsilon]\to\RR^2,\qquad
t\mapsto(x_0,t)
\qquad\textrm{for}\quad 0<\varepsilon\ll|x_0|\ll1. 
$$
Then we have  
$$
df(\gamma_{x_0}(t))\gamma_{x_0}'(t)=
\frac{(2x_0^4+t^6)}{(x_0^4+t^6)^4}
(-18x_0^5t^5(2x_0^4+t^6),(x_0^4+t^6)(2x_0^8-9x_0^4t^6+t^{12}))
$$
and we observe that the first component changes the sign at $t=0$ and 
the second component changes the sign four times.
Thus the images of $f\comp\gamma_{x_0}$ look like the following:

\vskip 1cm
\input{figureB}
\vskip 1cm
We see that $\RelInt({co}(f))$ contains singular matrices.
\end{exam}
\begin{exam}\label{3sheet}
Let $f:(\RR^2,0)\to(\RR^2,0)$ be a mapping defined by
$$
f(x,y)=
\begin{cases}\frac{1}
{x^2+y^2}\bigl(x(x^2-3y^2),y(3x^2-y^2)\bigr)&(x,y)\ne(0,0)\\
(0,0)&(x,y)=(0,0)
\end{cases}
$$
Then we have 
$$
df=\frac{1}{(x^2+y^2)^2}
\begin{pmatrix}
x^4+6x^2y^2-3y^4&8xy^3\\
-8x^3y&3x^4-6x^2y^2-y^4
\end{pmatrix}
$$
and $\det (df)=3$. 
We then obtain that 
\begin{align*}
&\frac13df(r,0)+\frac13df(r\cos\tfrac{2\pi}{3},r\sin\tfrac{2\pi}3)
+\frac13df(r\cos\tfrac{2\pi}{3},-r\sin\tfrac{2\pi}3)
\\
&=
\frac13
\begin{pmatrix}
1&0\\
0&3
\end{pmatrix}
+\frac13
\begin{pmatrix}
-\frac12&\frac{\sqrt{3}}{2}\\
-\frac{3\sqrt{3}}{2}&-\frac32
\end{pmatrix}
+\frac13
\begin{pmatrix}
-\frac12&-\frac{\sqrt{3}}{2}\\
\frac{3\sqrt{3}}{2}&-\frac32
\end{pmatrix}
=\begin{pmatrix}
0&0\\
0&0
\end{pmatrix}, 
\end{align*}
and so $\RelInt({co}(f))$ contains a singular matrix. 
We see that 
$$
f(r\cos\theta,r\sin\theta)=(r\cos3\theta,r\sin3\theta) 
$$
and that the mapping $f$ is a 3-sheeted branched covering at $0$.  
\end{exam}

\lsection{O-minimal key lemma}\label{S:KeyLemma}
The purpose of this section is to show the key Lemma 
\ref{o-minKeyLemma}. It will allow us to extend our control on directional 
derivatives 
from an open dense  set to the whole domain of the considered map.

\begin{defn}%[o-minimal structure] 
An {\bf o-minimal structure} on $(\RR,+, \cdot, <)$  
(cf. \cite{Coste} or \cite{vandebDries}) is a sequence of
boolean algebras $\cO_n$ 
of {\bf definable} subsets of $\RR^n$, 
such that for each $n\in\NN$
\begin{itemize}
\item
if $A\in\cO_m$ and $B\in\cO_n$, 
then $A\times B\in\cO_{m+n}$;
\item
if  $\Pi:\RR^{n+1}\to \RR^n$ is the canonical projection onto 
$\RR^n$ then for any $A\in\cO_{n+1}$, the set $\Pi(A)$ belongs to $\cO_n$;
\item
$\cO_n$ contains the family of algebraic subsets of $\RR^n$, 
that is, every set of the form $\{x\in\RR^n : p(x) = 0\}$,
where $p:\RR^n\to\RR$ is a polynomial function;
\item
the elements of $\cO_1$ are exactly the finite unions of intervals and points.
\end{itemize}
\end{defn}
\begin{defn}%[Definable function] 
Given an o-minimal structure  $\cO$ on $(\RR,+, \cdot,<)$ and   $U\subset \RR^m$, we say that a mapping 
$U\to\RR^n\cup\{+\infty\}$ is 
{\bf definable} in $\cO$ 
if its graph belongs to $\cO_{m+n}$. We say for short  that $f$ is definable if it is definable in some
o-minimal structure $\cO$ on $(\RR,+, \cdot,<)$.
\end{defn}

Let $U$ be an  open set of $\RR^m$ and 
let $f:U\to\RR^n$ be a continuous definable mapping. 
Recall that  the set ${B}(f)$ of the  points where $f$ is 
 not differentiable, is a nowhere dense subset  of $\RR^m$.
%Set $\cD_v(f)=\{Av:A\in\cD(f)\}$, where $\cD(f)=\{df(x):x\in U-B(f)\}$. 
If $A=df(x)\in \cD(f)$, then $Av=\partial_v f(x) \in \cD_v(f)$, the 
directional derivative   of $f$ at $x$ in  the direction $v$.

We state now the key  lemma.
\begin{lem}\label{o-minKeyLemma}
Let $U$ be an  open subset of $\RR^m$ and 
let $f:U\to\RR^n$ be a continuous definable mapping. 
Fix $x\in U$ and $v\in\RR^m$, $|v|=1$.
Let  $g(t)=f(x+tv)$, $t\in[a,b]$.
Then $g'(t)\in\overline{\cD_v(f)}$, whenever
$g'(t)$ exists.

\end{lem}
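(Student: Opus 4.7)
The plan is to construct a sequence $y_n\in U\setminus B(f)$ with $df(y_n)\cdot v\to g'(t_0)$, which directly yields $g'(t_0)\in\overline{\cD_v(f)}$.

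First I would exploit o-minimality to regularize $g$. Since $g:[a,b]\to\RR^n$ is continuous and definable, the monotonicity/cell decomposition theorem applied componentwise produces a finite partition of $[a,b]$ on each open subinterval of which $g$ is $C^1$. Applying the scalar mean value theorem to each component on the smooth intervals adjacent to $t_0$, one verifies that whenever $g'(t_0)$ exists, the one-sided limits of $g'$ at $t_0$ (along the adjacent smooth pieces) exist and equal $g'(t_0)$. In particular, if $t_0$ is a limit point of the definable set $T:=\{t\in[a,b]:x+tv\notin B(f)\}$, one may pick $t_n\to t_0$ in $T$ to obtain $g'(t_n)=df(x+t_n v)\cdot v\in\cD_v(f)$ and $g'(t_n)\to g'(t_0)$, completing the proof in this case.

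The remaining case is that a full neighborhood $I$ of $t_0$ satisfies $x+tv\in B(f)$ for all $t\in I$. For this I would invoke Lemma~\ref{omiting} (together with a dimension-counting argument on the fibers of the projection $\{(w,t):x+w+tv\in B(f)\}\to\RR^m$) to produce, for every $\delta>0$, a vector $w$ with $|w|<\delta$ such that the parallel segment $\{x+w+tv:t\in[a,b]\}$ meets $B(f)$ in only finitely many points. For such admissible $w$, $g_w(t):=f(x+w+tv)$ is piecewise $C^1$, with $g_w'(t)=df(x+w+tv)\cdot v\in\cD_v(f)$ on its smooth locus. I would then run a diagonal construction: for each $n$ choose $h_n>0$ small enough that $|(g(t_0+h_n)-g(t_0))/h_n-g'(t_0)|<1/n$; then pick admissible $w_n$ with $|w_n|$ small enough (using uniform continuity of $f$ on a compact neighborhood) that $|(g_{w_n}(t_0+h_n)-g_{w_n}(t_0))/h_n-(g(t_0+h_n)-g(t_0))/h_n|<1/n$; and finally locate a smooth point $s_n\in[t_0,t_0+h_n]$ of $g_{w_n}$ at which $g_{w_n}'(s_n)$ is within $1/n$ of the Newton quotient, setting $y_n:=x+w_n+s_n v$.

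The main obstacle is the last step. For vector-valued $g_{w_n}$, a componentwise mean value theorem supplies such an $s_n$ \emph{per component} but not simultaneously across all components, so a naive integral argument would only yield $g'(t_0)\in\overline{co}(\cD_v(f))$, the closed convex hull. To refine this to the closure as stated, I would rely on tameness more delicately, applying o-minimal monotonicity and definable choice to the map $s\mapsto g_{w_n}'(s)$, together with the key observation that in the present case $v$ is tangent to $B(f)$ at $x+t_0 v$, so that $\partial_v f$ extends continuously from nearby smooth strata to $x+t_0 v$ along suitable definable curves with value $g'(t_0)$. This Whitney-type regularity along tangent directions is the essential tame input of the lemma.
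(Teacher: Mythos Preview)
Your first case is fine, and the paper implicitly uses the same one-sided-limit argument to reduce to generic $t$. The genuine gap is exactly where you flagged it: the final step of your second case. Appealing to ``Whitney-type regularity along tangent directions'' is not a proof; no such statement has been established here, and in general the extension of $\partial_v f$ from smooth strata to a point of $B(f)$ need not be single-valued, so one cannot simply assert continuity along ``suitable definable curves'' without further work. As you yourself observe, the diagonal MVT argument applied componentwise only yields $g'(t_0)\in\overline{co}(\cD_v(f))$, which is strictly weaker than the claim.

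The missing idea is much simpler than Whitney regularity: reduce to $n=1$ at the very start. The statement $g'(t_0)\in\overline{\cD_v(f)}$ is a componentwise statement, so it suffices to treat scalar-valued $f$. Once $n=1$, your obstacle vanishes: the scalar mean value theorem furnishes a single $s_n\in[t_0,t_0+h_n]$ with $g_{w_n}'(s_n)$ equal to the Newton quotient, and your diagonal construction goes through cleanly to produce $y_n\in U\setminus B(f)$ with $\partial_v f(y_n)\to g'(t_0)$.

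The paper makes precisely this reduction to $n=1$, and then also reduces to $m=2$ by choosing a direction $u\perp v$ so that the $2$-plane through the line meets $B(f)$ in a one-dimensional set (your use of Lemma~\ref{omiting} plays the same role). From there the paper argues slightly differently: rather than a diagonal MVT construction, it sets $F(t,s)=f(x+tv+su)-f(x+tv)$, notes $F(t,0)=0$, and shows directly that $\partial F/\partial t(t,s)=\partial_v f(x+tv+su)-g'(t)\to 0$ as $s\to 0$ for generic $t$, using o-minimal sign-constancy of $\partial F/\partial t$ and $\partial^2 F/\partial t^2$ on a cell together with the ordinary MVT in $t$. Both routes ultimately rest on the scalar reduction; yours would be a perfectly good alternative once that reduction is made.
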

\begin{proof}
Clearly it is enough to  consider the case   $n=1$.
Since $f$ is a continuous definable mapping, $g$ is a continuous 
definable function. Hence,  see  \cite{Coste} or \cite{vandebDries},  
$g'(t)$ exists (and is continuous) except for finitely many  points.
 It is thus enough to show the lemma for a generic $t$, that is, for all but 
finitely many $t\in[a,b]$.
So, in the  proof we shall replace the segment $[a,b]$ by a  subsegment,  
but for simplicity we shall 
denote  it again by $[a,b]$.
  First we explain the reduction to the case $m=2$.
Let  $S_v$ denote the unit sphere in the orthogonal complement of $v$. For 
any $u\in S_v$
we define a map 
$$ 
p_{u,v}:[a,b]\times[0,+\infty )\ni (t,s)\mapsto x+tv+su\in \RR^m.
$$

\begin{lem}\label{2dimreduction}
There exists $u\in S_v$ such that 
$\dim p_{u,v}^{-1} (B(f)) = 1.$ Actually the set of all such $u$ is dense in $S_v$.
\end{lem}
\begin{proof}  Assume that there exists  an open non-empty  set  
$ W\subset S_v$
 such that the corresponding sets  $p_{u,v}^{-1} (B(f))$, $u\in W$,
are of dimension $2$. We have a natural projection 
$$\pi:\RR^m\setminus \RR v \to S_v,$$ which
%$$\pi:\RR^m\setminus (x+\RR v) \to S_v,$$ which
is a   composition of: the translation by $-x$, the orthogonal projection on the orthogonal complement 
of $v$ and next the radial projection on $S_v$.
Note that for any $u \in S_v$, the   image of   $p_{u,v}$
%, except the line   $x+\RR v$,  
is contained in the fiber $\pi^{-1}(u)$.
It follows from the formula for dimension  of a definable set that
$$\dim B(f) \ge min\{ \dim \pi^{-1}(u)\cap B(f),\, u\in W\} + \dim W =2+(m-2)=m,$$
which is a contradiction, since  $B(f)$ is a nowhere dense set, so 
$\dim B(f) <m$. 
\end{proof}
To achieve the proof of Lemma \ref{o-minKeyLemma} we fix $u\in S_v$ such that 
$\dim p_{u,v}^{-1} (B(f)) = 1$.
Then, for some $\varepsilon >0$ small enough, we consider the following  
continuous definable function
$$F(t,s)=f(x_0+tv+su)-f(x_0+tv), \, t \in[a,b], \, s\in [0,\varepsilon).$$
Clearly $F(t,0)=0$. Since $\dim p_{u,v}^{-1} (B(f)) = 1$, applying the cell 
decomposition (c.f. \cite{Coste} or \cite{vandebDries}) to $p_{u,v}^{-1} (B(f))$
we may assume (changing suitably $a$ and $b$) that  $F$ is differentiable 
(even $2$-times
differentiable)  in
$[a,b]\times(0,\varepsilon)$. Moreover  
$\pd{F}{t}(t,s) = \partial_v f (p_{u,v}(t,s)) -g'(t)$.
%Since $\pd{F}{t}(t,s)=\pd{}{t}f(x_0+tv+su)-\pd{}{t}f(x_0+tv)$, 
So it is enough to show $\displaystyle{\lim_{s\to0}\pd{F}{t}(t,s)}=0$, for
 generic $t$.  Consider the definable set 
$$
Z=\biggl\{(t,s)\in(t^*-\delta,t^*+\delta)\times(0,\varepsilon):
\pd{F}{t}(t,s)=0,\ \frac{\partial^2F}{\partial t^2}(t,s)=0\biggr\}.
$$
Applying  the cell decomposition to $Z$ we may assume that
$\pd{F}{t}(t,s)$ and $\frac{\partial^2F}{\partial t^2}(t,s)$ are of  
 constant sign on $[a,b]\times(0,\varepsilon)$.
Let us assume that both partials are strictly positive (the other cases 
are similar). 
%Then $Z_s=\{t\in(t^*-\delta,t^*+\delta):(t,s)\in Z\}$ are finite sets. 
%By \cite[Theorem 2.9 ]{Coste}, we can assume that 
%$\#Z_s$ is constant for $s\in\{0,\varepsilon\}$, 
%choosing $\varepsilon$ small enough. 
%Taking $t^*$ generic and choosing $\delta$ small enough, 
%we can assume that 
%$(t^*-\delta,t^*+\delta)\times(0,\varepsilon)\cap Z=\emptyset$. 

We claim that, for generic $t$,  
$\pd{F}{t}(t,s)\to0$ as $s\to0$. 
 Assume that for some 
$t^*\in [a,b)$ we have  
 $\pd{F}{t}(t^*,s)\to 2c>0$, as $s\to0$. (A  priori we may have $c=+\infty$,  
we leave to the reader to 
 adapt the argument below to this case.)
 For each $s\in (0,\varepsilon)$ the function $t\to\pd{F}{t}(t,s)$ 
is increasing, so for any $s>0$ small enough we have 
$$
\pd{F}{t}(t,s)\ge c,\, t\in[t^*,b]. 
$$ 
By the Mean Value Theorem it
 follows that 
$$
{F}(t,s)\ge  F(t^*,s) +c(t-t^*), t\in[t^*,b]. 
$$
 So taking limit as  $s\to0$, we obtain
 $$
{F}(t,0)\ge  F(t^*,0) +c(t-t^*), t\in[t^*,b]. 
$$
But this is absurd since  ${F}(t,0)=0, t\in[t^*,b]$. 
\end{proof}

\subsection{Tame, subanalytic and definable mappings}

We explain below the notion of tame mapping we use in this paper.
\begin{defn}
$X\subset\RR^n$ is {\bf semianalytic} if, 
for all $x\in\RR^n$, there is an open
neighborhood $U$ of $x$ such that $X\cap U$ is a finite Boolean
combination of  sets $\{x\in U:f(x)=0\}$ and $\{x\in U:g(x)>0\}$, 
where $f,g:U\to\RR$ are analytic.
\end{defn}
\begin{defn}
$X\subset\RR^n$ is {\bf subanalytic} (cf. \cite{Gabrielov}, 
\cite{Hironaka})
if, for all $x\in\RR^n$, there is an open set
$U$ and a bounded semianalytic set $Y\subset\RR^{n+m}$  
such that $X\cap U$ is the projection of $Y$ into $U$.
\end{defn}
%We call a continuous map $f:U\to \RR^n$, where $U\subset \RR^m$, subanalytic
%if its graph is a subanalytic subset of $\RR^m\times\RR^n$.
 Let us recall that  the  collection of global subanalytic sets  
form an o-minimal structure.
(Recall that $A\subset \RR^n$ is {\bf globally subanalytic}  
if  $A$ is subanalytic as subset of the projective space $\PP^n$, for  the 
natural embedding  $\RR^n \to\PP^n$). In particular, if $U\subset \RR^n$ is 
bounded and  $f: U\to \RR$  is a bounded subanalytic function then $f$ is 
definable, that is, $f$ is globally subanalytic.  
Both conditions are actually necessary. 

Recall that we call  a continuous map $f:U\to \RR^n$, where $U\subset \RR^m$ is open, 
{\bf tame} if its graph is definable in an o-minimal structure or subanalytic. 
Note that  Lemma   
\ref {o-minKeyLemma} applies also to subanalytic functions since we may assume that
$f$ is bounded.

\subsection{Directional derivatives of tame mappings }\label{dirder}
Let $v\in S^{m-1}$. 
Recall that $$
\partial_v f(x)=\lim_{h\to+0}\frac{f(x+hv)-f(x)}{h}.
$$
is the directional derivative   
of $f$ at $x$ in  direction $v$.
Let  $f:U\to \RR^n$ be a tame function, where $U$ is an open subset of $\RR^n$.

Let $B_v(f)$ be the set of points $x\in U$ such that $\partial_v f(x)$ does 
not exists
(precisely equals $+\infty$ or $-\infty$).  Recall that  
$B_v(f) \subset B(f)$, where
$B(f)$ is the set of points at which $f$ is not differentiable.
We denote  
$$Dir_v (f) =\{\partial_v f(x): \, x\in U\setminus B_v(f)\},$$
and 
$$
\overline {co}_v  (f) =\overline {co}(Dir_v(f)).$$

Recall that $\cD_v(f)=\{Av:A\in\cD(f)\}$, where 
$\cD(f)=\{df(x):x\in U-B(f)\}$. 
Note that   Lemma \ref {o-minKeyLemma} 
implies the following statement.
\begin{prop}\label{Prop:dirder}
Let $U$ be an open subset  of $\RR^n$ and $f: U \to \RR^n$  a  continuous 
mapping which is  tame
(i.e subanalytic or definable in an o-minimal structure). Then 
$\overline{ Dir_v (f) } =  \overline{ \cD_v (f)}$,
hence
$$\overline {co}_v(f)=\overline {co}(\cD_v(f)).$$
\end{prop}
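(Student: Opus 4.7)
The plan is to reduce the proposition to proving the equality of closures $\overline{Dir_v(f)}=\overline{\cD_v(f)}$; the equality of closed convex hulls then follows immediately from the standard identity $\overline{co}(A)=\overline{co}(\overline A)$ recalled in Section \ref{convexity}. For the inclusion $\overline{\cD_v(f)}\subseteq\overline{Dir_v(f)}$ I would argue directly: any $x\in U\setminus B(f)$ is in particular in $U\setminus B_v(f)$, and the directional derivative $\partial_v f(x)$ equals $df(x)v$, so $\cD_v(f)\subseteq Dir_v(f)$ and the inclusion of closures is automatic.

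The substantive part is the opposite inclusion $\overline{Dir_v(f)}\subseteq\overline{\cD_v(f)}$. Fix $x\in U\setminus B_v(f)$ and set $a=\partial_v f(x)\in Dir_v(f)$. Consider the one-variable tame function $g(t)=f(x+tv)$ on a small interval $[0,\varepsilon]$ with $x+tv\in U$. By o-minimal cell decomposition applied componentwise, after shrinking $\varepsilon$ we may assume $g$ is $C^1$ on $(0,\varepsilon)$, and each coordinate $g'_i$ is a continuous definable function on $(0,\varepsilon)$ having a limit $L_i\in\RR\cup\{\pm\infty\}$ as $t\to 0^+$ (the monotonicity theorem). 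Applying the Mean Value Theorem coordinate by coordinate gives, for each $t\in(0,\varepsilon)$, a point $\xi_i(t)\in(0,t)$ with
$$
\frac{g_i(t)-g_i(0)}{t}=g'_i(\xi_i(t)).
$$
Letting $t\to 0^+$, the left-hand side tends to $a_i$ (by the hypothesis $x\notin B_v(f)$), while the right-hand side tends to $L_i$; hence $L_i=a_i$ is finite, and so $g'(t)\to a$ componentwise as $t\to 0^+$. But Lemma \ref{o-minKeyLemma} applied on $[0,\varepsilon]$ asserts that whenever $g'(t)$ exists it lies in $\overline{\cD_v(f)}$, so each $g'(t)$ for $t\in(0,\varepsilon)$ belongs to $\overline{\cD_v(f)}$. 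Since this set is closed, the limit $a$ also lies there, which is exactly the desired conclusion.

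The main obstacle I anticipate is the passage from existence of the right-sided directional derivative to the equality $\lim_{t\to 0^+}g'(t)=\partial_v f(x)$: Lemma \ref{o-minKeyLemma} only controls $g'$ at differentiability points, not at the endpoint $t=0$, so one really needs o-minimality to guarantee that $g'$ has a (possibly a priori infinite) limit at $0^+$, and then the MVT (or equivalently the fundamental theorem of calculus applied to each $g_i$) to identify this limit with $a$. Everything else is formal: closedness of $\overline{\cD_v(f)}$ delivers $a\in\overline{\cD_v(f)}$, and the convex-hull statement $\overline{co}_v(f)=\overline{co}(\cD_v(f))$ is then an immediate corollary.
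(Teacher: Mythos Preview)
Your proof is correct and follows the same route as the paper, which simply states that the proposition is implied by Lemma~\ref{o-minKeyLemma} without spelling out any details. You have correctly identified and filled in the one non-trivial step the paper leaves implicit: Lemma~\ref{o-minKeyLemma} controls $g'(t)$ only at points where the (two-sided) derivative exists, so to capture the one-sided directional derivative $\partial_v f(x)=g'(0^+)$ one needs the o-minimal monotonicity theorem to guarantee $\lim_{t\to0^+}g'(t)$ exists, together with the Mean Value Theorem to identify this limit with $a$.
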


Let us  consider the linear map $\varphi: M(m,n)\to \RR^n$,
$\varphi (A) = Av$. We  know by Proposition \ref{Prop:dirder}
that  $\overline{co}_v(f)=\overline {co}(\varphi ({co}(f))$.
Let us assume  now that $f$ is Lipschitz, 
hence $\overline{co}(f)$ is compact. Thus
$$\overline {co}_v(f)=\overline {co}(\varphi ( \overline {co}(f)) 
=\varphi ( \overline {co}(f)).
$$
This shows that the  condition $(C_e)$ and   Lemma \ref{extremalimage} imply
the following important fact.
\begin{lem}\label{crucialemLip} If $f$ is Lipschitz and satisfies condition 
$(C_e)$,  then 
  the vector $0\in \RR^n$ is an extremal point in $\overline {co}_v(f)$.
\end{lem}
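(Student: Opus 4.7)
My plan is to recognize this as a direct application of Lemma \ref{extremalimage} to the linear map $\varphi : M(m,n) \to \RR^n$ defined by $\varphi(A) = Av$. The setup in the paragraphs immediately preceding the lemma already supplies the crucial identity
\[
\overline{co}_v(f) = \varphi(\overline{co}(f)),
\]
which uses the Lipschitz hypothesis through compactness of $\overline{co}(f)$ (without compactness one only gets the closure on the right). With this in hand, the lemma reduces to a purely formal verification.

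Specifically, I would set $C := \overline{co}(f)$ and $S := \Sigma_v(f)$. By the definition of $\Sigma_v(f)$, one has $S = C \cap \varphi^{-1}(0)$. The single item to check in order to invoke Lemma \ref{extremalimage} is the set-theoretic identity $S = C \cap \varphi^{-1}(\varphi(S))$; this is immediate, because $\varphi(S) \subseteq \{0\}$ gives $\varphi^{-1}(\varphi(S)) \subseteq \varphi^{-1}(0)$, while the reverse inclusion $S \subseteq C \cap \varphi^{-1}(\varphi(S))$ is tautological. Condition $(C_e)$ then asserts that $S$ is extremal in $C$, and Lemma \ref{extremalimage} yields that $\varphi(S)$ is extremal in $\varphi(C) = \overline{co}_v(f)$.

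Finally I would unpack $\varphi(S)$. When $\Sigma_v(f) \neq \emptyset$, this set equals $\{0\}$, so $\{0\}$ is an extremal subset of $\overline{co}_v(f)$ and hence $0$ is an extremal point, as claimed. When $\Sigma_v(f) = \emptyset$, the origin does not lie in $\varphi(\overline{co}(f)) = \overline{co}_v(f)$ at all, so the statement is vacuous. I do not foresee a genuine obstacle: the whole content of the lemma is absorbed into Lemma \ref{extremalimage} via the identification $\overline{co}_v(f) = \varphi(\overline{co}(f))$, and the only item to verify by hand is the hypothesis on $S$, which is essentially built into the definition of $\Sigma_v(f)$.
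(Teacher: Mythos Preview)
Your proposal is correct and follows exactly the route the paper intends: the paragraph preceding the lemma establishes $\overline{co}_v(f)=\varphi(\overline{co}(f))$ via compactness, and the paper then simply says that condition $(C_e)$ together with Lemma \ref{extremalimage} yield the result. You have merely made explicit the verification of the hypothesis $S=C\cap\varphi^{-1}(\varphi(S))$ and the case distinction on whether $\Sigma_v(f)$ is empty, both of which the paper leaves implicit.
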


Similarly, for the need of the proof of Theorem \ref{THM21}, we consider 
closed 
convex cones.
We put $\overline{cone}_v(f):= \overline{cone}( \cD_v(f))$. Then
 Proposition \ref{Prop:dirder}  and Lemma  \ref{extremalconimage} yields
$$\overline{cone}_v(f)=\varphi (\overline{cone}(f)),$$
since $\varphi (\overline{cone}(f))$ is a closed convex cone.

Note that  $\overline {co}_v(f) \subset \overline{cone}_v(f)$, so we conclude 
from the above discussion that the condition $(C^c_e)$ and  
Lemma \ref{extremalimage} imply
the following important fact.
\begin{lem}\label{crucialemnonLip} If $f$ is continous and satisfies 
condition $(C^c_e)$,  then 
  the vector $0\in \RR^n$ is an extremal point in $\overline {cone}_v(f)$. 
Hence, also  $0\in \RR^n$ is an extremal point in $\overline {co}_v(f)$.
\end{lem}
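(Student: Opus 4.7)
The plan is to reduce everything to an application of Lemma \ref{extremalimage} (and Lemma \ref{extremalset}) via the linear map
$\varphi:M(m,n)\to\RR^n$, $\varphi(A)=Av$. Recall that the discussion immediately preceding the statement, based on Proposition \ref{Prop:dirder} combined with Lemma \ref{extremalconimage}, already identifies $\overline{cone}_v(f)=\varphi(\overline{cone}(f))$ as a closed convex cone in $\RR^n$.

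First, I would record the tautological but crucial identity
$$
\Sigma^c_v(f)=\overline{cone}(f)\cap\ker\varphi
=\overline{cone}(f)\cap\varphi^{-1}(\{0\})
=\overline{cone}(f)\cap\varphi^{-1}\bigl(\varphi(\Sigma^c_v(f))\bigr),
$$
which is precisely the compatibility hypothesis of Lemma \ref{extremalimage}, with $C=\overline{cone}(f)$ (convex) and $S=\Sigma^c_v(f)$. Since by hypothesis $(C^c_e)$ the set $S$ is semi-extremal in $C$, Lemma \ref{extremalimage} applied in its semi-extremal version immediately yields that $\varphi(S)=\{0\}$ is semi-extremal in $\varphi(C)=\overline{cone}_v(f)$.

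Next I would invoke Lemma \ref{extremalset}, which asserts that a semi-extremal one-point set is in fact extremal. This upgrades the previous conclusion to: $\{0\}$ is extremal in $\overline{cone}_v(f)$, i.e., $0\in\RR^n$ is an extremal point of $\overline{cone}_v(f)$, which is the first claim.

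For the second claim, I use the inclusion $\overline{co}_v(f)\subset\overline{cone}_v(f)$, noted just before the statement, together with Lemma \ref{extremalsubset}: extremality in an ambient (convex) set is inherited by intersection with any subset. Hence $\{0\}\cap\overline{co}_v(f)$ is extremal in $\overline{co}_v(f)$; this is either the singleton $\{0\}$ (if $0\in\overline{co}_v(f)$) or the empty set (in which case the assertion is vacuous), and in either situation the conclusion holds. No step involves any genuine obstacle; the only point to check is the tautological identity $\Sigma^c_v(f)=C\cap\varphi^{-1}(\varphi(S))$, which is transparent from the definition of $\Sigma^c_v(f)$, so the proof is essentially a chaining of the already established lemmas.
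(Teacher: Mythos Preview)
Your proof is correct and follows essentially the same route as the paper: the paper derives the lemma directly from the preceding discussion, namely the identification $\overline{cone}_v(f)=\varphi(\overline{cone}(f))$ together with condition $(C^c_e)$ and Lemma~\ref{extremalimage}, and the inclusion $\overline{co}_v(f)\subset\overline{cone}_v(f)$ for the second claim. You have simply spelled out the verification of the hypothesis $S=C\cap\varphi^{-1}(\varphi(S))$ and the upgrade from semi-extremal to extremal via Lemma~\ref{extremalset} more explicitly than the paper does.
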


\lsection{Proof of Theorems \ref{THM1}, \ref{THM12}, \ref{THM2}.}
\label{S:Proof12}

\begin{lem}\label{LemmaK1}
Take $A\in M(m,n)$ and $v\in S^{m-1}$. 
If $\dist(A,\Sigma)\ge\delta$, then $|Av|\ge\delta$. 
\end{lem}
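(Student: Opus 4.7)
The plan is to deduce this immediately from Lemma \ref{LemmaKOS} (which in the setting of Theorem \ref{THM1} is applicable since $m \le n$). Indeed that lemma gives the identity
\[
\dist(A,\Sigma) \;=\; \inf\{|Au| : u \in S^{m-1}\},
\]
so the assumption $\dist(A,\Sigma) \ge \delta$ means every unit vector $u$ satisfies $|Au| \ge \delta$, and in particular $|Av| \ge \delta$ for the chosen $v$. That is the whole argument.

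If one wishes a self-contained verification without appealing to Lemma \ref{LemmaKOS}, I would argue by contrapositive. Assume $|Av| < \delta$ and exhibit a singular matrix $B$ with $\|A-B\| < \delta$. Recalling the convention that $A \in M(m,n)$ is a linear map $\RR^m \to \RR^n$, set
\[
B \;:=\; A - (Av)\,v^{T},
\]
so that $Bv = Av - (Av)\,(v^{T}v) = Av - Av = 0$ since $|v|=1$. Thus $B$ has nontrivial kernel and, because $m \le n$, it fails to be injective, hence $B \in \Sigma$. The rank-one perturbation $(Av)v^{T}$ has operator norm equal to $|Av|\,|v| = |Av|$, so $\|A-B\| = |Av| < \delta$, contradicting $\dist(A,\Sigma) \ge \delta$. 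There is no real obstacle here; the only point worth noting is the convention about the direction in which $A$ acts, which is what makes the choice $w = v$ in $B = A - (Av)w^{T}$ both well-defined and optimal.
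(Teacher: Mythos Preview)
Your proof is correct and the first paragraph is precisely the paper's own argument: invoke Lemma~\ref{LemmaKOS} to identify $\dist(A,\Sigma)$ with $\inf_{u\in S^{m-1}}|Au|$ and conclude. Your second paragraph, the contrapositive via the rank-one perturbation $B=A-(Av)v^{T}$, is a valid self-contained alternative that the paper does not give; just note that the identity $\|(Av)v^{T}\|=|Av|$ uses the Euclidean norm on $\RR^m$, whereas the paper allows arbitrary fixed norms, so strictly speaking that step would need a minor adjustment (e.g.\ replacing $v^{T}$ by a norming functional for $v$) in the general case.
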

\begin{proof}
By Lemma \ref{LemmaKOS}, we have 
$\dist(A,\Sigma)=\inf\{|Av|:v\in S^{m-1}\}$, so the lemma is clear. 
\end{proof}
Next we state a classical lemma on the  projection on a convex closed subset.
\begin{lem}\label{LemmaK2}
Let $C\subset \RR^n$ be a convex and closed subset with $0\not\in C$. 
Then there exists unique $w\in C$ such that
\begin{itemize}
\item $|w|=\inf\{|u|:u\in C\}$.
\item if $x\in C$ then $\langle\frac{w}{|w|},x\rangle\ge|w|>0$.
\end{itemize}
\end{lem}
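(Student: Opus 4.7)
The plan is to establish the three assertions — existence of a minimizer, its uniqueness, and the separating inequality — by standard arguments from convex analysis, relying only on finite dimensionality, convexity, and the parallelogram identity. Set $\delta := \inf\{|u|:u\in C\}$; because $C$ is closed and $0\notin C$, one has $\delta>0$.

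For existence, I would take a minimizing sequence $u_k\in C$ with $|u_k|\to\delta$. Being bounded in $\RR^n$, it admits a convergent subsequence by Bolzano--Weierstrass; closedness of $C$ places the limit $w$ in $C$, and continuity of the norm gives $|w|=\delta$.

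For uniqueness, suppose $w_1,w_2\in C$ both realize the infimum. By convexity $(w_1+w_2)/2\in C$, so its norm is at least $\delta$. The parallelogram identity
$$
|w_1-w_2|^2 = 2|w_1|^2+2|w_2|^2-|w_1+w_2|^2 \le 4\delta^2-4\delta^2 = 0
$$
then forces $w_1=w_2$.

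For the inequality, the key observation is that for every $x\in C$ the segment $[w,x]$ lies in $C$, so the function $t\mapsto |(1-t)w+tx|^2$ on $[0,1]$ attains its minimum at $t=0$. Differentiating at $t=0$ gives the variational inequality $\langle w,x-w\rangle\ge 0$, i.e.\ $\langle w,x\rangle\ge|w|^2$; dividing through by $|w|>0$ yields $\langle w/|w|,x\rangle\ge|w|$, and the strict positivity $|w|>0$ is simply $\delta>0$. The argument is entirely routine; the only point to be careful about is that $C$ may be unbounded, but this is harmless since any minimizing sequence is automatically bounded once one knows $\delta$ is finite.
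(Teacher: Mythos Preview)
Your proof is correct and is precisely the standard argument for projection onto a closed convex set; the paper itself states Lemma~\ref{LemmaK2} without proof, calling it ``a classical lemma on the projection on a convex closed subset,'' so there is no alternative approach to compare against.
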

\begin{proof}[Proof of Theorem \ref{THM1}]
Take $x,x'\in U$,  $x\ne x'$ and put
$v=\frac{x'-x}{|x'-x|} $, 
 $v\in S^{m-1}$. We consider the set 
$$
{co}_v(f)=\{Av:A\in {co}(f)\}.
$$ 
By  Lemma \ref{LemmaK1}, if $u\in \overline{{co}_v(f)}$ then $|u|\ge\delta$. 
So Lemma \ref{LemmaK2} implies that there exists a unique $w\in \overline{{co}_v(f)}$ 
such that 
$$
|w|=\inf\{|x|:x\in{co}_v(f)\}.
$$
Note that by Lemma \ref{LemmaK1}, we have $|w|\ge \delta$. 
Hence  Lemma \ref{LemmaK2} yields 
$$
\Bigl\langle\frac{w}{|w|},u\Bigr\rangle\ge\delta
\quad\textrm{ if $u\in {co}_v(f)$.}  
$$ 
%We define $\gamma:[0,|x'-x|]\to U$ by 
%$$
%\gamma(t)=x+tv,\qquad\textrm{where}\quad v=\frac{x'-x}{|x'-x|}. 
%$$
Finally consider the function
$$
h(t)=
\Bigl\langle\frac{w}{|w|},f(x+tv)-f(x)\Bigr\rangle,\qquad
t\in[0,|x'-x|]. 
$$
Moving slightly $x$ and $x'$, we may assume that, 
for almost all (in fact, all but finite, since $f$ is tame) 
$t\in[0,|x'-x|]$, the function $f$ is differentiable at $x+tv$. 
Since $h'(t)=\langle\frac{w}{|w|},df(x+tv)v\rangle\ge\delta$, we obtain that
$$
h(|x'-x|)=\int_0^{|x'-x|}h'(t)dt\ge\delta|x'-x|. 
$$
So we have proved that $|f(x')-f(x)|\ge h(|x'-x|)\ge\delta|x'-x|$.  
Note that, if $m=n$ then
$f:U \to f(U)$ is open by the invariance of domain 
(Lemma \ref{InvarianceOfDomain}), 
so indeed $f:U \to f(U)$ is a homeomorphism. 
\end{proof}

We recall  now the classical result of Minkowski about supporting hyperplanes,
see e.g. \cite {HiriantUrruty-Lemarechal}.
\begin{lem}\label{LemmaMinkowski}
Let $C\subset \RR^n$ be a convex set, and assume that 
relative interior of $C $ does not contain $0$.
Then there exists  $w\in \RR^n$, $|w|=1$ such that
 $\langle{w},x\rangle \ge0$, for all $x\in C$.
\end{lem}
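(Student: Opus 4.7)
The plan is to rely on the nearest-point projection result just recalled (Lemma \ref{LemmaK2}), together with a standard limiting argument when $0$ lies on the boundary. First I would reduce to closed $C$: since $\RelInt(C)=\RelInt(\overline C)$ (already used in Section \ref{convexity}), the hypothesis $0\notin\RelInt(C)$ gives $0\notin\RelInt(\overline C)$, and a conclusion for $\overline C$ transfers immediately to $C\subset\overline C$. So I may assume $C$ is convex and closed, and split on whether $0\in C$ or not.

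In the easy sub-case $0\notin C$, Lemma \ref{LemmaK2} directly furnishes $w\in C$ of minimal norm with $\langle w/|w|,x\rangle\ge |w|>0$ for every $x\in C$, and the unit vector $w/|w|$ already does the job (with strict inequality, even).

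The main case is $0\in C\setminus\RelInt(C)$, so $0$ lies on the relative boundary. The first sub-step is to produce a sequence $p_n\to 0$ with $p_n\notin C$. If the affine hull $V$ of $C$ is a proper subspace of $\RR^n$ (note $V$ is a linear subspace, since it contains $0$), any $p_n\to 0$ with $p_n\notin V$ works. Otherwise $V=\RR^n$ and the relative interior coincides with the topological interior, so that $0$ being a boundary point of $C$ immediately yields such $p_n$. For each $n$ I apply the easy sub-case to the translated set $C-p_n$ (closed, convex, not containing $0$), obtaining a unit vector $w_n$ with $\langle w_n,x-p_n\rangle\ge 0$, i.e.\ $\langle w_n,x\rangle\ge\langle w_n,p_n\rangle$, for every $x\in C$. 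Compactness of the unit sphere lets me extract a subsequential limit $w_n\to w$ with $|w|=1$; since $|\langle w_n,p_n\rangle|\le |p_n|\to 0$, passing to the limit gives $\langle w,x\rangle\ge 0$ for all $x\in C$, as required.

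The argument is essentially Minkowski's supporting hyperplane theorem, so nothing is really hard. The only mildly delicate point is producing the approximating sequence $p_n\notin C$ in the main case, which forces the dichotomy on whether $\text{aff}(C)$ is proper or equal to $\RR^n$; everything else is a routine Hahn-Banach-type limit argument, built directly from the projection Lemma \ref{LemmaK2}.
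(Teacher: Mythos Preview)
Your argument is correct. The paper does not actually prove this lemma: it merely states it as the classical Minkowski supporting-hyperplane theorem and refers to \cite{HiriantUrruty-Lemarechal}. Your proof supplies a self-contained argument built from the projection result Lemma~\ref{LemmaK2} already in the paper, together with a standard compactness-and-limit step to handle the boundary case; this is precisely one of the textbook routes to the supporting-hyperplane theorem, so there is nothing to compare beyond noting that you have proved what the paper only quoted.
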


For the proof of the next two theorems the following lemma is crucial.
\begin{lem}\label{KeyLem}
Let $g:[a,b]\to\RR^n$, $a<b$, be a tame continuous  function.
If the relative interior of ${co}(g)$ does not contain $0$,
then there is $w\in\RR^n$ such that
$\langle w,g(a)\rangle<\langle w,g(b)\rangle$.
This obviously implies that $g(a)\ne g(b)$.
\end{lem}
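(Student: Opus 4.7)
The plan is to combine Lemma \ref{LemmaMinkowski} with the piecewise-$C^1$ regularity of tame one-variable functions, and then to sharpen the resulting weak inequality by a careful choice of $w$. First I would use cell decomposition: since $g$ is tame and continuous on $[a,b]$, it is $C^1$ except at finitely many points, so the fundamental theorem of calculus is available in the form $g(b)-g(a)=\int_a^b g'(t)\,dt$ (as an improper integral, convergent by continuity of $g$).

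Next I would apply Lemma \ref{LemmaMinkowski} to $C={co}(g)\subset\RR^n$ to obtain a unit vector $w\in\RR^n$ with $\langle w,x\rangle\ge 0$ for all $x\in{co}(g)$, hence in particular $\langle w,g'(t)\rangle\ge 0$ whenever $g'(t)$ exists. Setting $h(t)=\langle w,g(t)-g(a)\rangle$, integration immediately yields the weak inequality $\langle w,g(b)\rangle\ge\langle w,g(a)\rangle$.

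The main obstacle is promoting this to a strict inequality, and the plan is to refine the choice of $w$ so that $\langle w,\cdot\rangle$ does not vanish on all of $\cD(g)$. This is the standard proper-separation strengthening of Lemma \ref{LemmaMinkowski}, which I would establish by working inside the affine hull $L$ of ${co}(g)$: if $0\notin L$, one may take $w$ with $\langle w,x\rangle$ equal to a positive constant $c>0$ on $L\supset\cD(g)$; if $0\in L$, then $L$ is a linear subspace and the supporting hyperplane theorem applied inside $L$ yields $w\in L\setminus\{0\}$ with $\langle w,\cdot\rangle\ge 0$ on ${co}(g)$, and this functional cannot vanish identically on ${co}(g)$ since ${co}(g)$ affinely spans $L$ (otherwise $L\subset\{w=0\}$, contradicting $0\ne w\in L$). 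In either case some $x_0\in{co}(g)$ satisfies $\langle w,x_0\rangle>0$, and decomposing $x_0=\sum\lambda_i\,g'(t_i)$ with $\lambda_i>0$ produces an index $i$ and a point $t^*=t_i\in(a,b)$ with $\langle w,g'(t^*)\rangle>0$.

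To close the argument, tameness plays its final role: the set $\{t\in(a,b):\langle w,g'(t)\rangle>0\}$ is definable, contains $t^*$, and since $t^*$ lies in a $C^1$-piece of $g$ the continuity of $g'$ near $t^*$ forces this set to contain a nontrivial open interval. Hence the nonnegative integrand is strictly positive on a set of positive length, so $\int_a^b\langle w,g'(t)\rangle\,dt>0$, giving $\langle w,g(a)\rangle<\langle w,g(b)\rangle$, as claimed.
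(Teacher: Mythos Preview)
Your argument is correct and takes a different route from the paper. The paper proceeds by induction on $n$: it applies Lemma~\ref{LemmaMinkowski} to obtain some $w$ with $\langle w,g'(t)\rangle\ge0$, and when this inequality degenerates to an equality for all but finitely many $t$ (so that the image of $g$ lies in an affine hyperplane orthogonal to $w$) it invokes the inductive hypothesis in dimension $n-1$; otherwise it observes that $\langle w,g'(t)\rangle>0$ on an open interval and integrates directly. You instead avoid induction by strengthening the separation step itself: working inside the affine hull $L$ of ${co}(g)$ and splitting on whether $0\in L$, you produce at once a $w$ for which $\langle w,\cdot\rangle$ is nonnegative on $\cD(g)$ and strictly positive at some point of it. Your direct proper-separation argument is arguably cleaner; the paper's induction has the minor advantage of invoking only the weak supporting-hyperplane statement of Lemma~\ref{LemmaMinkowski}.

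One small point to tighten: you assert that $t^*$ lies in a $C^1$-piece of $g$, but the $t^*$ you extract from a convex decomposition of $x_0$ is a priori only a point where $g'$ exists, which could be one of the finitely many partition points from the cell decomposition. This causes no trouble---in the tame setting, if $g'(t^*)$ exists then the one-sided limits of $g'$ at $t^*$ (which exist by o-minimality) agree with $g'(t^*)$ by the mean value theorem, so $g'$ is continuous at $t^*$ and your open-interval conclusion follows---but a sentence to this effect would be welcome.
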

\begin{proof}
We show this lemma by induction on $n$.
Assume $n=1$. Since the relative interior of ${co}(g) $ does not contain $0$,
it follows  that either  $g'(t)\ge0$ for all but finitely many $t\in [a,b]$ 
or $g'(t)\le0$ for all but finitely many $t\in [a,b]$. 
Hence $g$ is  
monotonic. 
Note that $g$ is non constant, since otherwise  ${co}(g)=\{0\}$.
Therefore $g(a)\ne g(b)$.

Assume now  that $n>1$.
Since $0$ is not in the relative interior of ${co}(g)$, by Lemma \ref{LemmaMinkowski},
 there exists
$w\in\RR^n$, $|w| =1$ such that $\langle w,g'(t)\rangle\ge0$.
If $\langle w,g'(t)\rangle=0$, for all but finitely many $t\in [a,b]$ , then 
$g(t)\in H$, $t\in [a,b] $,
 for some  affine hyperplane $H$ 
 orthogonal to $w$. Hence we can apply the induction hypothesis.
Otherwise the affine hull of ${co}(g)$ is $\RR^n$, and
$\langle w,g'(t)\rangle$ is  strictly positive on an open interval.
This implies 
$$
\langle w,g(b)\rangle-\langle w,g(a)\rangle
=\int_{a}^{b}\langle w,g'(t)\rangle dt>0.
$$
\end{proof}

\begin{proof}[Proof of Theorem \ref{THM12}]
%Now we are going to achieve the proof of Theorem \ref{THM2}.
First let us observe that by the classical local inverse theorem and by the 
invariance of domain, we deduce that $f$  is open on $U\setminus B$.
Let us fix  $x,x'\in U$,  $x\ne x'$. Of course, if $x,x'\in B$ then, by the 
condition $(I)$, we have
$f(x)\ne f(x')$.

 Let us assume now that $x\in U\setminus B$ and that $f(x)=f(x')$. 
Since $f$ is open at $x$, there exists $V$ an open neighborhood of $x$  such 
that $f(V)$ is an
open  neighborhood of $f(x)$. By the continuity of $f$ at $x'$ there exists 
$V'$ an open neighborhood of $x'$  such that $V\cap V'=\emptyset$, 
$f(V')\subset f(V)$.
Recall that $B$ is closed and nowhere dense in $U$ hence also in $V$. So 
$W' = V'\setminus B \ne \emptyset$ is open. Now let $W= V\cap  f^{-1}(f(W'))$.
Note that  for any $x'\in W'$ there exists $x \in W$, hence 
$x\ne x'$,  such  that
$f(x)=f(x')$.
 So we can actually assume that both 
$x, x'\in U\setminus B$.
Now applying Lemma \ref{omiting} we can also suppose that
$[x,x']\cap B$ is finite.

%
%Then, using the fact that  $ U\setminus B$   is open and dense,  by the continuity of $f$
%at $x'$
%and the openness of $f$ at $x$ we can find $x$ and $x'\in U\setminus B$  which are arbitrary close respectively to
%$x$ and $x'$, such that $f(x)=f(x')$.  So we can actually assume that both 
%$x, x'\in U\setminus B$

Let us  put
$v=\frac{x'-x}{|x'-x|} $. Set $g(t)=f(x+tv)$,  $t\in [0,|x'-x|] $.
Let us  consider the linear map $\varphi: M(m,n)\to \RR^n$,
$\varphi (A) = Av$.  Then 
${co}_v(f):=\varphi({co} (f,U\setminus B))$ is a convex subset of $\RR^n$ 
which does not contain $0$.
Note that
$$
g'(t)= \partial_v f(x+tv)
$$
 at any point $t\in[0,|x'-x|] \setminus F$, where $F$ is a finite set. So we 
have
$$
{co}(g) \subseteq  {co}_v(f).
$$
We know that $0\notin {co}_v(f)$, hence $0\notin {co}(g)$, where ${co}(g)$ is 
precisely
the convex hull $\{g'(t): \,t\in [a,b]\setminus F\}$.
Thus, applying Lemma \ref{KeyLem} we obtain
 Theorem \ref{THM12}.
 \end{proof}

\begin{proof}[Proof of Theorem \ref{THM2}]
%Now we are going to achieve the proof of Theorem \ref{THM2}.
Let us fix  $x,x'\in U$,  $x\ne x'$ and put
$v=\frac{x'-x}{|x'-x|} $. Set $g(t)=f(x+tv)$,  $t\in [0,|x'-x|] $.
Note that
$$
g'(t)= \partial_v f(x+tv)
$$
 at any point $t\in[0,|x'-x|] $ such that $g$ is differentiable at $t$. 
So we have
$$
{co}(g) \subseteq \overline {co}_v(f).
$$

Recall that in Theorem \ref{THM2} the function $f$ is supposed to be  locally Lipschitz.
Since any open convex subset of $\RR^n$ is a union of increasing family
of compact convex sets we may assume that $f$ is actually
 Lipschitz. Hence, by Lemma \ref{crucialemLip}   we know that $0$ is an extremal
point of $\overline {co}_v(f)$. It follows from Lemma \ref{extremalsubset} that
$0$ is also an extremal
point of ${co}(g)$. But  ${co}(g)\ne \{0\}$ since otherwise $f$ is constant on $[x,x']$.
Thus  $0$ is not in the relative interior of  ${co}(g)$, so it is enough to apply Lemma \ref{KeyLem} to
obtain Theorem \ref{THM2}.

\end{proof}

\begin{proof}[Proof of Theorem \ref{THM21}]
It goes along the same line as the above proof of Theorem \ref{THM12},
however to conclude we use  Lemma \ref{crucialemnonLip}.
\end{proof}

\lsection{Proof of Theorems \ref{THM3}, \ref{THM4}.}\label{S:Proof34}
Throughout this section, $f:(\RR^n,0)\to(\RR^n,0)$ denotes a continuous tame map-germ. 
Let $y_1,\dots,y_n$ denote a coordinate system of the target. Set  
$f_j=y_j\comp f$. We shall not distinguish between a mapping and its germ.
%\begin{thm}\label{GIMT1}
%If there are systems of coordinates 
%$(x_1,\dots,x_n)$, 
%$(y_1,\dots,y_n)$ of source and target 
%and positive constants $K_j$, $L_j$ ($j=1,\dots,n-1$) and $K_n$ 
%so that 
%$$
%K_j\le
%\det\pd{(f_1,\dots,f_j)}{(x_1,\dots,x_j)}
%\le L_j\quad (j=1,\dots,n-1),%\qquad
%\textrm{ and } 
%K_n\le
%\det\pd{(f_1,\dots,f_n)}{(x_1,\dots,x_n)}, 
%$$
%then $f$ is a homeomorphism. 
%\end{thm} 
Consider the mapping $\phi_k:(\RR^n,0)\to(\RR^n,0)$, defined by 
$$
(x_1,\dots,x_n)\mapsto(f_1(x),\dots,f_k(x),x_{k+1},\dots,x_n). 
$$ 
We define $x^{(k)}=(x_1^{(k)},\dots,x^{(k)}_n)$, $k=1,\dots,n,n+1$ 
by the following relations: 
$$
x_j=x_j^{(1)}=\dots=x_j^{(j)},\qquad
x_j^{(j+1)}=\dots=x_j^{(n+1)}=f_j(x), \qquad j=1,\dots,n. 
$$
%\begin{align*}
%x_1&=&x^{(1)}_1&,&x^{(2)}_1&=&x^{(3)}_1&=&\dots&=&x^{(n)}_1&=&x^{(n+1)}_1&=&f_1(x)\\
%y&=&x^{(1)}_2&=&x^{(2)}_2&,&x^{(3)}_2&=&\dots&=&x^{(n)}_2&=&x^{(n+1)}_2&=&f_2(x)\\
%x_3&=&x^{(1)}_3&=&x^{(2)}_3&=&x^{(3)}_3&,&\dots&=&x^{(n)}_3&=&x^{(n+1)}_3&=&f_3(x)\\
%\vdots&&\vdots&&\vdots&&\vdots&&&&&&&\vdots\\
%x_n&=&x^{(1)}_n&=&x^{(2)}_n&=&x^{(3)}_n&=&\dots&=&x^{(n)}_n&,&x^{(n+1)}_n&=&f_n(x)
%\end{align*}
Then the mapping $\phi_k$ is expressed by $x\mapsto x^{(k+1)}$. 
If $\phi_{k-1}$ is a homeomorphism, 
$x^{(k+1)}$ is considered as a mapping of $x^{(k)}$. 
In this case, we have the following 
\begin{lem}\label{LemJac} 
$\pd{x^{(k+1)}_k}{x^{(k)}_k}
=
\det\pd{(f_1,\dots,f_k)}{(x_1,\dots,x_k)}
\bigg/
\det\pd{(f_1,\dots,f_{k-1})}{(x_1,\dots,x_{k-1})}$
\end{lem}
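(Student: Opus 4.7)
The plan is to compute the Jacobian of the composition $\psi_k := \phi_k\circ\phi_{k-1}^{-1}$ in two different ways and compare. Since the statement concerns partial derivatives, I work at a point where $f_1,\dots,f_k$ are differentiable and where $\phi_{k-1}$ is invertible (so, in particular, $\det\pd{(f_1,\dots,f_{k-1})}{(x_1,\dots,x_{k-1})}\neq 0$; otherwise the right-hand side is not defined).

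First I observe that $\psi_k$ has a very simple form. By the definition of the intermediate coordinates, for $j<k$ we have $x^{(k+1)}_j=f_j(x)=x^{(k)}_j$, for $j>k$ we have $x^{(k+1)}_j=x_j=x^{(k)}_j$, and only the $k$-th coordinate is genuinely modified: $x^{(k+1)}_k=f_k(x)$, viewed through $\phi_{k-1}^{-1}$ as a function $g$ of $x^{(k)}$. Hence $d\psi_k$ is the identity except for its $k$-th row, which yields
\[
\det d\psi_k \;=\; \pd{x^{(k+1)}_k}{x^{(k)}_k}.
\]

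Next I compute $\det d\psi_k$ via the chain rule: $\det d\psi_k=\det d\phi_k\cdot(\det d\phi_{k-1})^{-1}$. Because of the defining structure of $\phi_j$, the Jacobi matrix $d\phi_j$ is block lower-triangular, with the upper-left $j\times j$ block equal to $\pd{(f_1,\dots,f_j)}{(x_1,\dots,x_j)}$ and the lower-right $(n-j)\times(n-j)$ block equal to the identity $I_{n-j}$. Therefore
\[
\det d\phi_j \;=\; \det\pd{(f_1,\dots,f_j)}{(x_1,\dots,x_j)}
\qquad(j=k-1,k),
\]
and the desired formula follows immediately by comparing the two expressions for $\det d\psi_k$.

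This argument is essentially the chain rule plus an observation on block structure, so I do not expect any serious obstacle. The one point that deserves a sentence of justification is the first step: writing $\psi_k$ in the form (identity on all but one coordinate) uses that $\phi_{k-1}$ preserves the last $n-k+1$ coordinates, and that the first $k-1$ components of $\phi_k$ coincide with those of $\phi_{k-1}$; these are immediate from the definitions. Everything is carried out at a single smooth base point, so the hypothesis that $f$ is merely continuous and tame is not a nuisance here.
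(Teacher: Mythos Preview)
Your argument is correct and is exactly what the paper intends: its entire proof reads ``Chain rule.'', and you have simply written this out in full by computing $\det d(\phi_k\circ\phi_{k-1}^{-1})$ in two ways. One cosmetic slip: $d\phi_j$ is block \emph{upper}-triangular (the last $n-j$ rows are standard basis vectors, so the lower-left block vanishes while the upper-right block $\pd{(f_1,\dots,f_j)}{(x_{j+1},\dots,x_n)}$ need not), but this does not affect the determinant computation.
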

\begin{proof}
Chain rule.
\end{proof}

\begin{lem}\label{Cor1}.
Consider a continuous tame mapping 
$$
f:(\RR^n,0)\to(\RR^n,0)\quad
\textrm{defined by}\quad
(x_1,\dots,x_n)\mapsto(f_1(x),x_2,\dots,x_n).
$$
If there is a positive constant $K$ such that 
$K\le\pd{f_1}{x_1}$, then 
$f$ is a homeomorphism.
\end{lem}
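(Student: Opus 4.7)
The plan is to reduce to a one-dimensional statement along lines parallel to the $x_1$-axis, use the lower bound on $\pd{f_1}{x_1}$ to make each one-dimensional restriction strictly increasing, and then invoke the invariance of domain (Lemma~\ref{InvarianceOfDomain}).

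First I would exploit the special form of $f$: since $f_j(x)=x_j$ for $j\ge 2$, any two points with the same image must already agree in their last $n-1$ coordinates. So injectivity of $f$ is equivalent to strict monotonicity in $t$ of the one-variable continuous tame function
\[
g_{(x_2,\dots,x_n)}(t):=f_1(t,x_2,\dots,x_n)
\]
for each fixed $(x_2,\dots,x_n)$ near the origin. I would in fact strengthen this to the quantitative estimate
\[
f_1(t',x_2,\dots,x_n)-f_1(t,x_2,\dots,x_n)\ge K(t'-t)\quad\text{for }t'>t.
\]

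To establish the estimate, call a slice $(x_2,\dots,x_n)$ \emph{good} if the line $\{(s,x_2,\dots,x_n):s\in\RR\}$ meets $B(f)$ in only finitely many points. By a dimension argument applied to the tame nowhere dense set $B(f)\subset\RR^n$, the set of non-good slices is a definable subset of dimension at most $n-2$, hence nowhere dense in $\RR^{n-1}$. On a good slice $g_{(x_2,\dots,x_n)}$ is differentiable off a finite set, with derivative $\pd{f_1}{x_1}(t,x_2,\dots,x_n)\ge K$; as a continuous tame function of one variable it is absolutely continuous, so the integral of its derivative yields the bound. For an arbitrary slice I would approximate by good slices and pass to the limit using the continuity of $f_1$; the density statement underlying this approximation is precisely the content of Lemma~\ref{omiting}.

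With injectivity established on a neighbourhood of the origin, the invariance of domain shows that $f$ is open there, and since $f(0)=0$ this gives the desired homeomorphism of germs. The main obstacle is the passage from good slices to arbitrary ones: the hypothesis controls $\pd{f_1}{x_1}$ only off the nowhere dense set $B(f)$, so along certain exceptional lines the pointwise derivative information may fail and the estimate must be transferred by continuity rather than by direct integration along that very line.
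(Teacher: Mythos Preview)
Your proof is correct, and its skeleton---reduce to strict monotonicity of $t\mapsto f_1(t,x_2,\dots,x_n)$ on each line parallel to the $x_1$-axis, then invoke invariance of domain---is exactly what the paper does. The difference lies in how you handle the ``bad'' lines that may sit inside $B(f)$. The paper dispatches this in one stroke by citing the o-minimal key Lemma~\ref{o-minKeyLemma}: for \emph{any} line in the direction $v=e_1$, the derivative of the restriction (which exists for all but finitely many $t$ since the restriction is tame in one variable) lies in $\overline{\cD_v(f)}\subset\{(a,0,\dots,0):a\ge K\}$, so positivity on every slice is immediate. You instead rederive this special case directly: establish the quantitative bound on generic slices by integration, then transport it to exceptional slices by continuity of $f_1$. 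Your route is more self-contained for this particular lemma and avoids the heavier Lemma~\ref{o-minKeyLemma}; the paper's route is shorter because that lemma has already absorbed the approximation work. One minor remark: your appeal to Lemma~\ref{omiting} is not quite on point, since that lemma concerns segments $[x,y]$ from a fixed basepoint rather than lines in a fixed direction; but the dimension argument you give just before it is the correct justification and suffices on its own.
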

\begin{proof}
It is enough to show that $f$ is injective. 
Let $x\ne x'$ be two points in $U$. We set $v=\frac{x'-x}{|x'-x|}$.
If $v\ne(1,0,\dots,0)$, then it is clear that $f(x)\ne f(x')$.  
Assume that $v=(1,0,\dots,0)$. 
By  Lemma \ref{o-minKeyLemma}), 
we conclude that $\pd{f_1}{x_1}$ is positive, 
and $f_1(x)\ne f_1(x')$. This completes the proof.
\end{proof}
\begin{proof}[Proof of Theorem \ref{THM3}]
By the induction on $k$ we show that each $\phi_k$, $k=1,\dots,n$ is a homeomorphism. 
By Lemma \ref{Cor1}, we obtain $\phi_1$ is a homeomorphism.
Assume that $\phi_k$ is a homeomorphism. Then, by Lemma \ref{LemJac}, 
we have that 
$$ 
\pd{x^{(k+2)}_{k+1}}{x^{(k+1)}_{k+1}}
=
\det\pd{(f_1,\dots,f_{k+1})}{(x_1,\dots,x_{k+1})}
\bigg/
\det\pd{(f_1,\dots,f_{k})}{(x_1,\dots,x_{k})}
\ge \frac{K_{k+1}}{L_k}. 
$$
Applying Lemma \ref{Cor1}, we obtain that $\phi_{k+1}\comp\phi_k^{-1}$ is a
 homeomorphism. 
Thus the mapping $\phi_{k+1}=(\phi_{k+1}\comp\phi_k^{-1})\comp\phi_k$ is a homeomorphism.
\end{proof}
%\begin{thm}\label{GIMT2}
%If  mappings $\phi_k:(\RR^n,0) \to(\RR^n,0)$ ($k=1,\dots,n$) are finite,  
%and 
%$$
%0\le
%\det\pd{(f_1,\dots,f_j)}{(x_1,\dots,x_j)}
%\quad (j=1,\dots,n), 
%$$
%then $f$ is a homeomorphism. 
%\end{thm} 

For the proof of Theorem \ref{THM3} we shall need a following    variant of the previous lemma.
\begin{lem}\label{Cor2}
Consider a continuous finite tame mapping 
$$
f:(\RR^n,0)\to(\RR^n,0)\quad
\textrm{defined by}\quad
(x_1,\dots,x_n)\mapsto(f_1(x),x_2,\dots,x_n).
$$
If $0\le\pd{f_1}{x_1}$, then 
$f$ is a homeomorphism.
\end{lem}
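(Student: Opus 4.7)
The plan is to mimic the proof of Lemma \ref{Cor1}, but use the finiteness hypothesis to replace the strict positivity of $\partial f_1/\partial x_1$. As in the previous lemma it suffices to prove injectivity: once $f$ is shown to be a continuous injective map-germ $(\RR^n,0)\to(\RR^n,0)$, the Invariance of Domain (Lemma \ref{InvarianceOfDomain}) gives that $f$ sends a neighborhood of $0$ homeomorphically onto an open neighborhood of $0$.

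To prove injectivity, I would take two distinct points $x,x'$ near $0$ and set $v=(x'-x)/|x'-x|$. Since $f_j(x)=x_j$ for $j\ge 2$, if $v\ne(\pm 1,0,\dots,0)$ then $x_j\ne x'_j$ for some $j\ge 2$, and so $f(x)\ne f(x')$ already from the last $n-1$ components. Thus the only nontrivial case is $v=\pm e_1$; by symmetry assume $v=e_1$ and put $g(t)=f_1(x+tv)$ for $t\in[0,|x'-x|]$. Since $f$ is tame and continuous, Lemma~\ref{o-minKeyLemma} applies and gives $g'(t)\in\overline{\cD_v(f_1)}$ wherever $g'(t)$ exists. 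The hypothesis $\pd{f_1}{x_1}\ge 0$ off $B(f)$ means that $\cD_v(f_1)\subset[0,+\infty)$, hence $g'(t)\ge 0$ at every point of differentiability. Because $g$ is a tame continuous function of one variable, it is differentiable off a finite set, and we conclude that $g$ is nondecreasing on $[0,|x'-x|]$.

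The main obstacle, which is where the finiteness hypothesis enters, is to upgrade this to strict monotonicity: a priori $g$ might be constant on a subinterval. Suppose, for contradiction, that $g(0)=g(|x'-x|)$. Then by monotonicity $g$ is constant on the whole segment $[0,|x'-x|]$, which means that $f_1$ is constant on the segment $[x,x']$. But on that segment the remaining coordinates $x_2,\dots,x_n$ are also constant (they equal those of $x$), so the segment is mapped by $f$ to a single point. The preimage $f^{-1}(f(x))$ therefore contains a nondegenerate segment, contradicting the hypothesis that $f$ is finite (finite fibers). Hence $g(0)<g(|x'-x|)$, i.e.\ $f_1(x)\ne f_1(x')$, which gives $f(x)\ne f(x')$.

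Combining the two cases, $f$ is injective on some neighborhood of $0$. Invariance of Domain (Lemma \ref{InvarianceOfDomain}) then implies that $f$ is open, so $f$ is a homeomorphism onto its image. The only subtle point in the argument is the use of finiteness to rule out the degenerate possibility that $g$ is constant on an interval; everything else is a direct transposition of the proof of Lemma \ref{Cor1}, with the o-minimal key Lemma \ref{o-minKeyLemma} ensuring that control of $\partial f_1/\partial x_1$ off the nondifferentiability locus $B(f)$ propagates to control of the directional derivative at every point where $g$ is differentiable.
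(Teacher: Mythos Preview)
Your proof is correct and follows essentially the same approach as the paper's: reduce to injectivity, dispose of the case $v\ne\pm e_1$ via the last $n-1$ coordinates, and in the remaining case use Lemma~\ref{o-minKeyLemma} to see that $t\mapsto f_1(x+tv)$ is nondecreasing, then invoke finiteness to rule out constancy. Your write-up is in fact more explicit than the paper's in spelling out why finiteness forces $g(0)\ne g(|x'-x|)$.
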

\begin{proof}
It is enough to show that $f$ is injective. 
Let $x\ne x'$ be two points in $U$. We set $v=\frac{x'-x}{|x'-x|}$.
If $v\ne(1,0,\dots,0)$, then it is clear that $f(x)\ne f(x')$.  
Assume that $v=(1,0,\dots,0)$. 
By  Lemma \ref{o-minKeyLemma}, 
we conclude that $\pd{f_1}{x_1}$ is non-negative. 
Since $f$ is finite, $t\mapsto f_1(x+tv)$ is not constant. 
This completes the proof.
\end{proof} 
\begin{proof}[Proof of Theorem \ref{THM4}] 
Again, by induction on $k$ we show that each $\phi_k$, $k=1,\dots,n$ is a homeomorphism. From Lemma \ref{Cor2}, we obtain that $\phi_1$ is a homeomorphism.
Assume that $\phi_k$ is a homeomorphism. Then, by Lemma \ref{LemJac}, 
we have that 
$$ 
\pd{x^{(k+2)}_{k+1}}{x^{(k+1)}_{k+1}}
=
\det\pd{(f_1,\dots,f_{k+1})}{(x_1,\dots,x_{k+1})}
\bigg/
\det\pd{(f_1,\dots,f_{k})}{(x_1,\dots,x_{k})}
\ge 0. 
$$
Applying Lemma \ref{Cor2}, we obtain that $\phi_{k+1}\comp\phi_k^{-1}$ is a
 homeomorphism. 
Thus the mapping $\phi_{k+1}=(\phi_{k+1}\comp\phi_k^{-1})\comp\phi_k$ is a homeomorphism.
\end{proof}
%\begin{conj}
%Let $U$ be a connected open subset of $\RR^n$ and 
%let $f=(f_1,\dots,f_n):U\to\RR^n$ be a continuous subanalytic mapping. 
%\begin{enumerate}
%\item 
%If there is a positive constant $K$ with  
%$$
%K\le\det\pd{(f_1,\dots,f_n)}{(x_1,\dots,x_n)} \qquad\textrm{ on }\quad U-B(f),
%$$
%then $f$ is an open mapping.
%\item 
%If $\det\pd{(f_1,\dots,f_n)}{(x_1,\dots,x_n)}$ is non-negative and $f$ is a
%finite (or proper) mapping, then  $f$ is an open mapping.
%\end{enumerate}
%\end{conj}
%Several people have been working about open mapping theorem. 
%I have found the following articles in MathSciNet.
%\begin{itemize}
%\item 
%Jeyakumar, V.; Luc, D. T. : An open mapping
%theorem using unbounded generalized Jacobians.  Nonlinear Anal.  50
%(2002),  no. 5, Ser. A: Theory Methods, 647--663. 
%\item
%Hirsch, Morris W. :
%Jacobians and branch points of real analytic open mappings.  
%Aequationes Math.  63  (2002),  no. 1-2, 76--80. 
%\item
%Gamboa, J. M.; Ronga, F. :
%On open real polynomial maps.  J. Pure Appl. Algebra  110  (1996),  no. 3,
%      297--304.
%\item  
%Church, P. T. :
%On points of Jacobian rank $k$.  Trans. Amer. Math. Soc.  110  1964 413--423. 
%\item
%Church, P. T. :
%Differentiable open maps on manifolds.  Trans. Amer. Math. Soc.  109  1963 87--100.
%\end{itemize}
%Some of them can be found in the internet.  	

\lsection{Remark on homotopy type of $df$.}\label{S:Contract}
It looks interesting to investigate the homotopy type 
of the mapping
$$
df:(\RR^n-\widehat{B}(f),0)\to \GL(n,\RR),
$$
where $\widehat{B}(f)$ stands for the set of points at which $f$ is not differentiable 
or the differential exist but is singular.
By Cartan-Iwasawa  decomposition  (see e.g. \cite{Helgason}) we have a homeomorphism 
$$
\GL(n,\RR)\simeq\OT(n)\times\RR^{n(n+1)/2}.
$$
Let $\GL_+(n,\RR)$ denote the set of invertible matrices with positive
determinants. 
Then the above mapping induces a homeomorphism:
$$
\GL_+(n,\RR)\simeq\SO(n)\times\RR^{n(n+1)/2}.
$$
Let $p:\GL_+(n,\RR)\to\SO(n)$ denote the projection to the first
component. 
\begin{prop}\label{THM:Contract}
Let $f:(\RR^2,0)\to(\RR^2,0)$ be a continuous tame mapping with
$\widehat{B}(f)=\{0\}$ and $f^{-1}(0)=\{0\}$.
If the mapping 
$$
df:(\RR^2-0,0)\to\GL(2,\RR)
$$
is null homotopic, then $f$ is a homeomorphism.
\end{prop}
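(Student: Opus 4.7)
The plan is a covering-space argument. From $\widehat{B}(f)=\{0\}$ and $f^{-1}(0)=\{0\}$, the restriction of $f$ to $\RR^{2}\setminus\{0\}$ is a $C^{1}$ local diffeomorphism into $\RR^{2}\setminus\{0\}$. Using tameness and $f^{-1}(0)=\{0\}$, for every sufficiently small $\varepsilon>0$ there is a small neighbourhood $U$ of $0$ such that $f:U\setminus\{0\}\to B(0,\varepsilon)\setminus\{0\}$ is a proper local diffeomorphism, hence a covering of some signed degree $d\in\ZZ$. If $|d|=1$, then $f$ is injective on $U$, and by Lemma~\ref{InvarianceOfDomain} is a homeomorphism germ. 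So the task reduces to showing $|d|=1$.

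Assume without loss of generality that $df$ takes values in $\GL_{+}(2,\RR)$ (the orientation-reversing case is symmetric). Parametrize $\gamma_{r}(t)=r(\cos t,\sin t)$, $t\in[0,2\pi]$, for small $r>0$; then the winding of $f\circ\gamma_{r}$ around $0$ equals $d$. By the chain rule $(f\circ\gamma_{r})'(t)=df(\gamma_{r}(t))\,\gamma_{r}'(t)$ is the image of the pair of loops $(df|_{\gamma_{r}},\gamma_{r}')$ under the multiplication map $\mu:\GL_{+}(2,\RR)\times(\RR^{2}\setminus\{0\})\to\RR^{2}\setminus\{0\}$, $(A,v)\mapsto Av$. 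Both factors deformation retract onto $\SO(2)\simeq S^{1}$ acting standardly, so $\mu$ induces $(a,b)\mapsto a+b$ on $\pi_{1}$; since $\deg(\gamma_{r}')=1$ and, by the null-homotopy hypothesis, $\deg(df|_{\gamma_{r}})=0$, the turning number of $f\circ\gamma_{r}$ is
\[
\tau=\deg(df|_{\gamma_{r}})+1=1.
\]

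The crux is to identify this turning number with the winding $d$---a tame real-variable analogue of the Argument Principle: for a continuous tame $f:(\RR^{2},0)\to(\RR^{2},0)$ with $f^{-1}(0)=\{0\}$, a $C^{1}$ local diffeomorphism off $0$ of local degree $d$, the turning number of $f\circ\gamma_{r}$ equals $d$ for small $r$. The identity is immediate for the model $f(z)=z^{d}$, where $df=dz^{d-1}$ gives $\tau=(d-1)+1=d$. The general tame case follows either (i) by connecting $f$ through tame $C^{1}$ local diffeomorphisms of fixed local degree $d$ to the model via an o-minimal normalisation---both invariants being integer-valued and locally constant in such families, hence determined by the model value---or (ii) by a Gauss--Bonnet / Poincar\'e--Hopf argument on the branched disk $f(\bar B(0,r))$ with its pulled-back area form on $\bar B(0,r)\setminus\{0\}$. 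Combining, $d=\tau=1$ and $f$ is a homeomorphism germ.

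The main obstacle is precisely this final Argument-Principle step; the chain-rule computation giving $\tau=1$ is routine, but matching $\tau$ to $d$ requires genuinely geometric input. The specificity of dimension $2$ enters here, through $\GL_{+}(2,\RR)\simeq S^{1}$ and the Umlaufsatz-type identity; this is consistent with the paper's remark that the analogous statement fails in dimension $3$, where $\SO(3)$ has a very different topology.
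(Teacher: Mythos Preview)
Your approach is essentially the same as the paper's, though framed differently. The paper's proof also reduces to the model $z^k$: it asserts a \emph{regular homotopy} $H_t$ from $f$ to $z^k$ (where $k$ is the degree of $\phi_f(x)=f(x)/|f(x)|$), observes that this induces a homotopy $p\circ dH_t$ in $\SO(2)$, and since $p\circ df$ is null-homotopic while $p\circ d(z^k)$ has degree $k-1$, concludes $k=1$. Your ``Argument-Principle step'' $\tau=d$ is exactly the same identity, since $\tau=\deg(df|_{\gamma_r})+1$ and the paper's regular-homotopy argument gives $\deg(df|_{\gamma_r})=k-1=d-1$. Your approach (i)---connecting $f$ to $z^d$ through local diffeomorphisms of fixed degree---\emph{is} the paper's regular homotopy.

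The one point worth flagging is that you correctly identify this step as the crux and sketch two routes, while the paper simply asserts the regular homotopy exists (``we can construct a regular homotopy\dots''). Neither proof fully justifies this deformation; the paper leans on the classical fact that the homotopy class of $\phi_f$ determines the topological type of $f$ and that degree-$k$ maps of the punctured plane are regularly homotopic to $z^k$. Your Gauss--Bonnet/Poincar\'e--Hopf alternative (ii) is a genuine alternative route that would avoid the regular-homotopy machinery, but you would need to execute it. In short: same proof, same gap, more honestly acknowledged on your side.
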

\begin{proof}
We may assume that $df$ is a mapping into $\GL_+(2,\RR)$. Then,  
by the assumption, the mapping $p\comp df$ is null homotopic. 
We remark that $\pi_1(\GL_+(2,\RR))=\pi_1(\SO(2))=\ZZ$. 
%Consider the inclusion mapping $\iota:S^1_\epsilon\to\RR^2-0$. 
Since the homotopy type of the mapping  
$$
\phi_f:(\RR^2-0,0)\to S^1,\qquad 
(x,y)\mapsto 
\frac{f(x,y)}
{|f(x,y)|}
$$
determines the topological type of $f$, we can construct a regular 
homotopy $H_t$ ($t\in [0,1]$) between $H_1=f$ and $H_0$ where 
$H_0$ is the complex function $z^k$ for some integer $k$. 
This homotopy induces a homotopy $p\comp dH_t$. 
Since it is null homotopic, we conclude $k=1$. 
\end{proof}
The analogous statement is not true for $n\ge3$,    
since $\pi_1(\SO(n))=\ZZ/2\ZZ$ ($n\ge3$).  
%We set  $x^{(1)}=(x^{(1)}_1,\dots,x^{(1)}_n)=(x_1,\dots,x_n)$. 
\begin{exam}
Consider the mapping 
$$
f:\RR^3\to\RR^3\quad \textrm{defined by}\quad(x,y,z)\mapsto
\begin{cases}
\Bigl(\frac{x(x^2-3y^2)}{x^2+y^2},\frac{y(3x^2-y^2)}{x^2+y^2},
z\Bigr)&(x,y)\ne(0,0)\\
(0,0,z)&(x,y)=(0,0)
\end{cases}
$$
This mapping is 3-sheeted covering branching along $z$-axis but 
$df:\RR^3-\widehat{B}(f)\to\GL(3,\RR)$ is null homotopic, 
because $\pi_1(\SO(3))=\ZZ/2\ZZ$. 
\end{exam}


\begin{thebibliography}{99}
%\bibitem{kkf} 
%T.~Fukui, S.~Koike and  T.-C.~Kuo,
%Blow-analytic equisingularities, properties, problems and progress, 
%in ``Real analytic and algebraic singularities'',
%Pitman Research Notes in Mathematics Series,
%{\bf 381}, 1997, Longman, 8--29.
%\bibitem{fukurpa} 
%T.~Fukui, K.~Kurdyka and  L.~Paunescu,
%An inverse mapping theorem for arc-analytic homeomorphisms, 
%preprint.
%\bibitem{fp} 
%T.~Fukui and  L.~Paunescu,
%On blow-analytic equivalence, 
%preprint.
\bibitem{Coste}
M.~Coste, An Introduction to o-minimal Geometry, 
RAAG Notes, 81 pages, Institut de
Recherche Math\'ematiques de Rennes, November 1999. 
\bibitem{Clarke}
F.~H.~Clarke, On the inverse function theorem, 
Pacific J. Math.  64 (1976), no. 1, 97--102.
\bibitem{Dold}
A.~Dold, Lectures on algebraic topology,
Springer-Verlag, 1972
\bibitem{GaleNikaido} D.~Gale,  H.~Nikaido, The Jacobian matrix and global univalence of mappings, Math. Annal.  159 (1965),  81--93.

\bibitem{Gowda} M. S.~Gowda,  G.~Ravindran, Algebraic univalence theorems for nonsmooth functions, J. Math. Anal. Appl. 252 (2000), no. 2, 917--935.

\bibitem{gutu} O.~Gut\'u,  J. A..~Jaramillo, 
Global homeomorphisms and covering projections on metric spaces, Math.Ann. 338 (2007), 75-95.

\bibitem{Hadamard}
J.~Hadamard, Sur les transformations punctuelles,
Bull. Soc. Math. France  34 (1906),  71--84.
\bibitem{Helgason} S.~Helgason, Differential Geometry, Lie Groups, and Symmetric Spaces, American Mathematical Society, Providence, R.I., 2001.

\bibitem{HiriantUrruty-Lemarechal}
J-B.~ Hiriart-Urruty,  C.~  Lemar\'echal, 
Fundamentals of convex analysis,  
%Abridged version of  Convex analysis and minimization algorithms. I 
%[Springer, Berlin, 1993; MR1261420 (95m:90001)] and  II 
%[ibid.; MR1295240 (95m:90002)]. 
Grundlehren Text Editions, Springer-Verlag, Berlin, 2001.
\bibitem{Gabrielov} A.~Gabrielov, Projections of semi-analytic sets, Funct. An. \& Appl.
2 (1968), no. 4, 282-291.
\bibitem{Hironaka} H.~Hironaka, Subanalytic sets, in : Number Theory, Algebraic Geometry
and Commutative Algebra (in honor of Y. Akizuki) Kinokunya, Tokyo (1973), 453-493.

%\bibitem{jelonek}  Z.~Jelonek, On the generalized critical values of a polynomial mapping,
%Manuscripta Math. 110 (2003),  145-157.
\bibitem{KOS}
K.~Kurdyka, P.~Orro, and S.~Simon,
Semialgebraic Sard theorem for generalized critical values,
J. Diff. Geom. {\bf 56}, 2000, 67--92.
%\bibitem{Pawlucki}
%W.~Paw\l ucki, Le th\'eor\`eme de Puiseux pour une application  sous-analytique,  
%(French) [The Puiseux theorem for a subanalytic mapping]  
%Bull. Polish Acad. Sci. Math.  32  (1984),  no. 9-10, 555--560.
%\bibitem{Rockafellar}
%T.~Rockafellar, Convex analysis, Princeton University Press, 1970.
\bibitem{vandebDries}
L.~van den Dries, Tame topology and $o$-minimal structures. 
London Mathematical Society Lecture Note Series, 248,  
Cambridge University Press, Cambridge, 1998. 
\end{thebibliography}
\end{document}